\numberwithin{equation}{subsection}
\theoremstyle{plain}
\newtheorem{lem}{Lemma}[subsection]
\newtheorem{prop}[lem]{Proposition}
\newtheorem{thm}[lem]{Theorem}
\newtheorem{cor}[lem]{Corollary}
\newtheorem{ass}[lem]{Assumption}
\theoremstyle{remark}
\newtheorem{defn}[lem]{Definition}
\newtheorem{eg}[lem]{Example}
\newtheorem{rem}[lem]{Remark}
\newcommand{\Ext}{\operatorname{Ext}}
\newcommand{\Hom}{\operatorname{Hom}}
\newcommand{\ind}{\operatorname{ind}}
\newcommand{\res}{\operatorname{res}}
\newcommand{\ZZ}{{\mathbb Z}}
\newcommand{\NN}{{\mathbb N}}
\newcommand{\remb}{\operatorname{rem}}
\newcommand{\add}{\operatorname{add}}
\begin{document}

\title[Cyclotomic Brauer algebras]{Decomposition numbers for the
  cyclotomic  Brauer  algebras in characteristic zero} 
\author{C.~Bowman}
\email{Bowman@math.jussieu.fr}
\address{Institut de MathŽmatiques de Jussieu, 175 rue du chevaleret,
  75 013, Paris} 
\author{A.~G.~Cox} 
\email{A.G.Cox@city.ac.uk} 
\author{M.~De Visscher}
\email{Maud.Devisscher.1@city.ac.uk }
\address{Centre for Mathematical Science,
 City University London,
 Northampton Square,
 London,
 EC1V 0HB,
 England.}
\subjclass[2000]{20C30} \date{\today}

\begin{abstract}
We study the representation theory of the cyclotomic Brauer algebra
via truncation to idempotent subalgebras which are isomorphic to a
product of walled and classical Brauer algebras.  In particular, we
determine the block structure and decomposition numbers in
characteristic zero.
\end{abstract}
\maketitle

\section*{Introduction} 
The symmetric and general linear groups satisfy a double centraliser
property over tensor space.  This relationship is known as Schur--Weyl
duality and allows one to pass information between the representation
theories of these algebras. The Brauer algebra is an enlargement of
the symmetric group algebra and is in Schur-Weyl duality with the
orthogonal (or symplectic) group.

The cyclotomic Brauer algebra $B^m_n$ is a corresponding enlargement
of the complex reflection group algebra $H_n^m$ of type $G(m,1,n)$.
This was introduced by \cite{hoBMW} as a specialisation of the
cyclotomic BMW algebra, and has been studied by various authors (see
for example \cite{ghm, ruixu,ruiyu, yu}).

The algebra $H_n^m$ is Morita equivalent to a direct sum of products
of symmetric group algebras. One might ask if this equivalence extends
to the cyclotomic Brauer algebra. Although there is no direct
equivalence, we will see that the underlying combinatorics of $B^m_n$
is that of a product of classical Brauer and walled Brauer algebras.

Our main result is that certain co-saturated idempotent subalgebras of
$B^m_n$ are isomorphic to a product of classical Brauer and walled
Brauer algebras. Over a field of characteristic zero, this induces
isomorphisms between all higher extension groups
$\Ext^i(\mathcal{F}(\Delta), -)$.  Hence we obtain the decomposition
numbers and block structure of the cyclotomic Brauer algebra in
characteristic zero from the corresponding results for the Brauer and
walled Brauer algebras \cite{marbrauer, cdv}.

We exhibit a tower of recollement structure \cite{cmpx} for $B_n^m$, and
discuss certain signed induction and restriction functors associated
with this. We expect that this structure will also be a useful tool in the
positive characteristic case.
     
Diagrams for the cyclotomic Brauer algebra come with an orientation
due to the relationship with the cyclotomic BMW algebra. However,
one can define a similar algebra without orientation, which we shall
call the unoriented cyclotomic Brauer algebra. In an Appendix we show
that our results can be easily modified for this algebra, to reduce
its study to a product now just of Brauer algebras. The advantage of
this unoriented version is that analogues can be defined associated to
general complex reflection groups of type $G(m,p,n)$; we will consider
the representation theory of such algebras in a subsequent paper.

\section{Cyclotomic Brauer algebras}\label{defintionsandstuff}

In this section we define the cyclotomic Brauer algebra,
$B_n^m=B^m_n(\delta)$ over an algebraically closed field $k$ of
characteristic $p\geq 0$. We assume throughout the paper that $m$ is
invertible in $k$ and we fix a primitive $m$-th root of unity $\xi$.
 
\subsection{}\textbf{Definitions}
 
Given $m, n\in \mathbb{N}$ and $\delta=(\delta_0, \ldots,\delta_{m-1})
\in k^{ m }$, the {\it cyclotomic Brauer algebra} $B_n^m(\delta)$ is a
finite dimensional associative $k$-algebra spanned by certain Brauer
diagrams.  An {\it $(m,n)$-diagram} consists of a {\it frame} with $n$
distinguished points on the northern and southern boundaries, which we
call {\it nodes}.  We number the northern nodes from left
  to right by $1\ldots n$ and the southern nodes similarly by
  $\bar{1},\ldots,\bar{n}$. Each node is joined to precisely one
other by a strand; strands connecting the northern and southern edge
will be called \emph{through strands} and the remainder \emph{arcs}.
There may also be closed loops inside the frame, those diagrams
without closed loops are called \emph{reduced} diagrams.

Each strand is endowed with an orientation and labelled by an element
of the cyclic group $\ZZ/m\ZZ$.  We may reverse the orientation by
relabelling the strand with the inverse element in $\ZZ/m\ZZ$.  We
identify diagrams in which the strands connect the same pairs of nodes
and  (after being identically oriented) have the same labels.

As a vector space, $B^m_n$ is the $k$-span of all reduced
$(m,n)$-diagrams.  Figure \ref{xyelts} gives an example of two such
elements in $B_{6}^3(\delta)$.

\begin{figure}[ht]
\includegraphics[width=8cm]{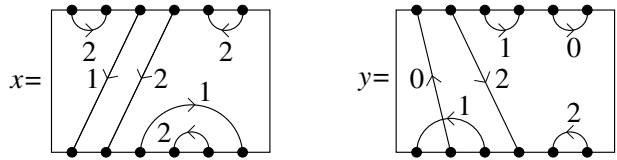}
\caption{Two elements in $B_6^3(\delta)$.}
\label{xyelts}
\end{figure}

We define the product $x \cdot y$ of two reduced $(m,n)$-diagrams $x$
and $y$ using the concatenation of $x$ above $y$, where we identify
the southern nodes of $x$ with the northern nodes of $y$.  More
precisely, we first choose compatible orientations of the strands of
$x$ and $y$. Then we concatenate the diagrams and add the labels on
each strand of the new diagram to obtain another $(m,n)$-diagram.

Any closed loop in this $(m,n)$-diagram can be oriented such that as
the strand passes through the leftmost central node in the loop it
points downwards. If this oriented loop is labelled by $i\in\ZZ/m\ZZ$
then the diagram is set equal to $\delta_i$ times the same diagram
with the loop removed.

\begin{eg} 
Consider the product $x \cdot y$ of the elements in Figure
\ref{xyelts}. After concatenation we obtain the element in Figure
\ref{product1}. Reading from left to right in the diagram we have that
$1-0 \equiv 1$, $2+2\equiv1$, and $1-2-1+0\equiv1$, ($\text{mod } 3$)
and therefore we obtain the
reduced diagram in Figure \ref{productfinal} by
removing the closed loop labelled by 1, and multiply by $\delta_1$.
\end{eg}

\begin{figure}[ht]
\includegraphics[width=3.0cm]{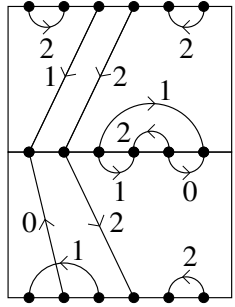}
\caption{The unoriented and unreduced product of $x$ and $y$.}
\label{product1}
\end{figure}

\begin{figure}[ht]
\includegraphics[width=3.9cm]{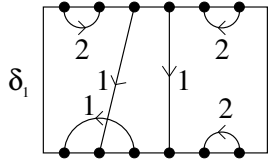}
\caption{The reduced product of $x$ and $y$.}
\label{productfinal}
\end{figure}

From now on, we will omit the label on any strand labelled by $0\in
\ZZ /m\ZZ$.

We will need to speak of certain elements of the algebra with great
frequency. Let $t_{i,j}$ (for $1\leq i,j\leq n$) be the diagram with
only $0$ labels and having through strands from $i$ to $\bar{j}$, $j$
to $\bar{i}$, and $l$ to $\bar{l}$ for all $l\neq i,j$. Let $t_i^r$
(for $1\leq i\leq n$ and $0\leq r\leq m-1$) be the diagram with
through strands from $l$ to $\bar{l}$ for all $l$, with the through
strand from $i$ labelled by $r$ and all other labels being $0$. These
elements are illustrated in Figure \ref{tare}. We let $e_{i,j}$ (for
$1\leq i,j\leq n$) be the diagram with only $0$ labels and having arcs
from $i$ to $j$ and $\bar{i}$ to $\bar{j}$, and through strands from
$l$ to $\bar{l}$ for all $l\neq i,j$. This element is illustrated in
Figure \ref{eare}.

The elements $t_{i,i+1}$ (with $i\leq n-1$) and $t_{1}^1$ are
generators of the group algebra 
$$H^m_n=k((\ZZ/m\ZZ) \wr \Sigma_n)$$ as a subalgebra of
$B^m_n(\delta)$. Note that $B^m_1(\delta)\cong k(\ZZ/m\ZZ)$; for
convenience, we set $B^m_0(\delta)=k$. It is easy to see that the
cyclotomic Brauer algebra is generated by the elements $t_{i,i+1},
t_1^1$, and $e_{1,2}$ (for $i\leq n-1$).

\begin{figure}[ht]
\includegraphics[width=7.0cm]{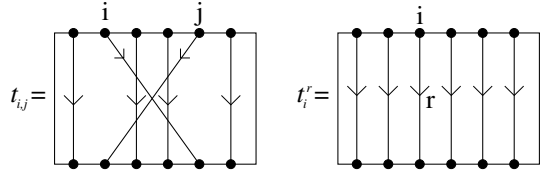}
\caption{The elements $t_{i,j}$ and $t_i^r$.}
\label{tare}
\end{figure}

\begin{figure}[ht]
\includegraphics[width=3.0cm]{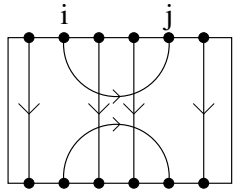}
\caption{The element $e_{i,j}$.}
\label{eare}
\end{figure}

We have defined the cyclotomic Brauer algebra in terms of $\delta
=(\delta_0, \ldots, \delta_{m-1}) \in k^{ m}$.  However, we shall find
that it is signed polynomials in these parameters which govern the
representation theory of the algebra.

\begin{defn}
For each $0\leq r \leq m-1$ we define the $r$th signed cyclotomic
parameter to be
\begin{align*}
\overline{\delta}_r = 
\frac{1}{m} \sum_{i=0}^{m-1} \xi^{ir} \delta_i.
\end{align*}
Note that $\overline{\delta}_r$ and $\overline{\delta}_{m-r}$ are
swapped by the map $\xi \leftrightarrow \xi^{-1}$.
\end{defn}

\begin{rem}
Cyclotomic Brauer algebras were originally defined by
H\"aring-Oldenburg \cite{hoBMW}.  Our definition can easily be seen to
be equivalent to that of Rui and his collaborators (see \cite{ruiyu}
and \cite{ruixu}).  The version considered by Goodman and Hauschild
Mosley \cite{ghm} and Yu \cite{yu} is the specialisation of this
algebra obtained by setting $\delta_r = \delta_{m-r}$.
\end{rem}

\begin{rem}
In \cite{ruiyu} and \cite{ruixu} semi-simplicity conditions are given
for $B^m_n$ in terms of the signed parameters.  We note that there is
a mistake in the statement of \cite[Theorem 8.6]{ruiyu} which runs
through both of these papers.  This is a simple misreading of the
(correct) circulant matrices calculated in the proof of the theorem.

Their vanishing conditions are given in terms of $\overline{\delta}_r
- m\epsilon_{(r,0)}$ where $\epsilon_{(r,0)}$ is the Kronecker
function.  Correct versions of these statements can be deduced by
substituting this  by $\overline{\delta}_r - m\epsilon_{(r,m-r)}$.
Compare \cite[Theorem 6.2]{cddm} and \cite[Proposition 4.2]{cdm} to
see how the $-m\epsilon_{(r,m-r)}$ relates to the semi-simplicity of
the Brauer algebra versus the walled Brauer algebra.
\end{rem}

\begin{rem}
We have that $B_{n}^2(\delta)$ is a subalgebra of the recently defined
Brauer algebra of type $C_n$ (see \cite{cly}).  This can be seen by
`unfolding' the diagrams (as outlined in \cite[Section 4.3]{mgp}) and
using \cite[Theorem 3.6]{bowbrauer}.
\end{rem}

\subsection{}\textbf{Classical Brauer and walled Brauer algebras}

The classical Brauer algebra $B(n,\delta)$ ($\delta\in k$) is given by
the particular case $B_n^1(\delta)$ with $\delta_0=\delta$. Note that
the orientation of the strands in Brauer diagrams plays no role in
this case and so can be ignored.

The walled Brauer algebra $WB(r,s,\delta)$ is the subalgebra of
$B(r+s, \delta)$ spanned by the so-called walled Brauer
diagrams. Explicitly, we place a vertical wall in the (r+s)-Brauer
diagrams after the first $r$ northern (resp. southern) nodes and we
require that arcs must cross the wall and through strands cannot cross
the wall.

\section{Representations of $H^m_n$}
In this section we review the construction of the Specht modules
for the  group algebra $H^m_n$ of the complex reflection group
$(\ZZ/m\ZZ) \wr  \Sigma_n$.

\subsection{}\textbf{Compositions and partitions}\label{comb}

An $m$-{\it composition} of $n$ is an $m$-tuple of non-negative
integers $\omega=(\omega_0, \ldots , \omega_{m-1})$ such that
$\sum_{i=0}^{m-1}\omega_i = n$. A partition is a finite decreasing
sequence of non-negative integers. An $m$-{\it partition} of $n$ is an
$m$-tuple of partitions $\lambda =(\lambda^0, \ldots , \lambda^{m-1})$
such that $\sum_{i=0}^{m-1} |\lambda^i|=n$ (where $|\lambda^i|$
denotes the sum of the parts of the partition $\lambda^i$).  Given an
$m$-partition $\lambda$ we associate the $m$-composition
$$|\lambda|=(|\lambda^0|,|\lambda^1|, \ldots , |\lambda^{m-1}|).$$ For
an $m$-composition of $n$, $\omega$, we define another $m$-composition
$[\omega]$ by
$$[\omega]=([\omega_0], [\omega_1], \ldots, [\omega_{m-1}]=n)$$
where $[\omega_r]=\sum_{i=0}^r \omega_i$ for $0\leq r\leq m-1$.
For an $m$-partition $\lambda$ we define $[\lambda]=[|\lambda|]$.

The \textit{Young diagram} of an $m$-partition is simply the $m$-tuple
of Young diagrams of each partitions. We do not distinguish between
the $m$-partition $\lambda$ and its Young diagram.  For an
$m$-partition $\lambda$, define the set $\remb(\lambda)$
(resp.$\add(\lambda)$) of all \emph{removable boxes} (respectively
\emph{addable boxes}) to be those which can be removed from
(respectively added to) ${\lambda}$ such that the result is the Young
diagram of an $m$-partition.  We can refine this by insisting that a
removable (respectively addable) box has sign $\xi^r$ if it can be
removed (respectively added) to $\lambda^r$, for $0\leq r \leq m-1$.
We denote these sets by $\xi^r$-$\remb(\lambda)$ and
$\xi^r$-$\add(\lambda)$ respectively.

\subsection{}\textbf{Idempotents}

We define some idempotents in $H_n^m$ which play a very important role
in this paper.  Note that $k(\ZZ /m\ZZ \times \ldots \times \ZZ/m\ZZ)$
occurs naturally as the subalgebra of $H_n^m$ spanned by all diagrams
where node $i$ is connected to node $\overline{i}$ for all $1\leq
i\leq n$. As $m$ is invertible in $k$ we have that $k(\ZZ/m\ZZ)$ is
semisimple and decomposes into a sum of 1-dimensional modules given by
$\xi^r$ ($0\leq r\leq m-1$). We denote by $T_i^r$ the idempotent in
the copy of $k(\ZZ/m\ZZ)$ on the i-th strand corresponding to
$\xi^r$. This idempotent is given as follows.

\begin{defn}\label{Ti} For each $1\leq i\leq n$ and each $0\leq r\leq
  m-1$, define the idempotent 
$$T_i^r= \frac{1}{m} \sum_{1 \leq q \leq m}\xi^{qr}t_i^q.$$
\end{defn}

Now we will consider certain products of these idempotents.
 Let $\omega$ be an $m$-composition of $n$. We have
$$0\leq [\omega_0]\leq [\omega_1]\leq [\omega_2]\leq \ldots \leq
[\omega_{m-1}]=n.$$ So for each $1\leq i\leq n$ there is a unique
$0\leq r\leq m-1$ with
$$[\omega_{r-1}]< i \leq [\omega_r]$$ (where we set
$[\omega_{-1}]=0$). In this case we write $i\in [\omega_r]$. 
Now we define the idempotent $\pi_\omega$ as follows.

\begin{defn}\label{piomega} Let $\omega$ be an $m$-composition of
  $n$. Then we define 
$$\pi_\omega = \prod_{r=0}^{m-1} \prod_{i\in [\omega_r]}T_i^r.$$
\end{defn}

The element $\pi_\omega$ is a linear combination of diagrams, but can
be viewed as putting the element $T^0$ on each of the first $\omega_0$
strands of the identity diagram, then the element $T^1$ on each of the
next $\omega_1$ strands,..., and finally $T^{m-1}$ on each of the last
$\omega_{m-1}$ strands.

\subsection{}\textbf{Specht modules of $H^m_n$}\label{spechth} 

For an $m$-composition $\omega=(\omega_0, \omega_1, \ldots ,
\omega_{m-1})$ of $n$ we define the Young subgroup $\Sigma_\omega$ of
$\Sigma_n$ by
$$\Sigma_\omega =\Sigma_{\omega_0}\times \Sigma_{\omega_1}\times
\ldots \times \Sigma_{\omega_{m-1}}$$ and the corresponding Young
subalgebra $H_\omega^m$ of $H_n^m$ by
$$H_\omega^m=k((\ZZ/m\ZZ) \wr \Sigma_\omega).$$

 \begin{defn}
Let $\lambda, \mu$ be $m$-partitions of $n$.  We say that $\lambda$
dominates $\mu$ and write $\mu \unlhd_n \lambda$ if
\begin{align*}
[\lambda^{j-1}] + \sum_{i=1}^k \lambda^j_i \geq [\mu^{j-1}] +
\sum_{i=1}^k \mu^j_i
\end{align*}
for all $0 \leq j \leq m-1$ and $k\geq 0$ (where we set
$[\lambda^{-1}]=[\mu^{-1}]=0$).   
 \end{defn}
 
Given any $k\Sigma_n$-module $M$ and any $r\in \ZZ/m\ZZ$ we define the
$H_n^m$-module $M^{(r)}$ by setting $M^{(r)}\!\downarrow_{\Sigma_n}=M$
and each $t_i$ ($1\leq i\leq n$) acts on $M^{(r)}$ by scalar
multiplication by $\xi^r$. In particular, if $\lambda$ is a partition
of $n$ and we denote by $S(\lambda)$ the corresponding Specht module
for $k\Sigma_n$ then we have an $H_n^m$-module $S(\lambda)^{(r)}$ for
each $0\leq r\leq m-1$. This module is the Specht $H_n^m$-module
labelled by $(\emptyset, \ldots, \emptyset ,\lambda, \emptyset, \ldots
,\emptyset)$ where $\lambda$ is in the $r$-th position. More generally
we have the following result.

 \begin{prop}[Section 5 of \cite{gl}]
The algebra $H^m_n$ is cellular with respect to the dominance order
$\unlhd_n$ on the set of $m$-partitions of $n$.  For a given
$m$-partition $\lambda$ of $n$, the cell module $\mathbf{S}(\lambda)$
is given by
$$\mathbf{S}(\lambda) \cong (S(\lambda^0)^{(0)} \otimes
\ldots
\otimes S(\lambda^{m-1})^{(m-1)})\!\uparrow_{H^m_{|\lambda|}}^{H^m_n}.$$
\end{prop}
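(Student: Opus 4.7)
The plan is to deduce the result from the general principle, originally due to Graham and Lehrer and developed further by Geetha--Goodman, that wreath products of cellular algebras with symmetric groups are again cellular. Since $m$ is invertible in $k$, the group algebra $k(\ZZ/m\ZZ)$ is semisimple, hence trivially cellular, with one-dimensional simple modules $k_r$ on which $t_1$ acts as $\xi^r$ for $0\leq r\leq m-1$. These serve as the ``cell modules'' of the base algebra, and the idempotents $T_i^r$ of Definition \ref{Ti} are the corresponding primitive idempotents realising them inside $H^m_n$.

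First I would fix, for each partition $\mu$ of some integer $N$, Murphy's standard cellular basis $\{m^\mu_{st}\}$ of $k\Sigma_N$ indexed by pairs of standard tableaux of shape $\mu$. Given an $m$-partition $\lambda$ with composition $\omega=|\lambda|$, I would then tensor these together to produce a cellular basis of $k\Sigma_\omega = k\Sigma_{\omega_0}\otimes\cdots\otimes k\Sigma_{\omega_{m-1}}$ indexed by pairs of $m$-tableaux of shape $\lambda$. Twisting the $r$-th factor by $\xi^r$ via multiplication by $\pi_\omega$ yields a basis of the $H^m_\omega$-module $S(\lambda^0)^{(0)}\otimes\cdots\otimes S(\lambda^{m-1})^{(m-1)}$. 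Finally, choosing a system $D_\omega$ of distinguished coset representatives for $\Sigma_\omega\backslash\Sigma_n$ and concatenating, I obtain a candidate cellular basis of $H^m_n$ whose elements are, roughly, of the form $d_1^{-1}\,\pi_\omega\, m^{\lambda}_{st}\, d_2$ with $d_1,d_2\in D_\omega$.

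The cellular data then consist of: the poset of $m$-partitions of $n$ ordered by $\unlhd_n$; the pair-indexing set for each $\lambda$ given by (coset rep, standard tableau) pairs; and the anti-involution given on diagrams by the top-to-bottom flip combined with inversion of each label on $\ZZ/m\ZZ$. The cellularity axiom to be verified is that for any $h\in H^m_n$, left multiplication of a basis element by $h$ lies modulo basis elements labelled by $m$-partitions strictly higher in $\unlhd_n$ in the span of basis elements with the same right-index. The hard part is precisely this multiplication check: when $h$ acts, a coset representative may push the Murphy basis out of its Young subalgebra, and one has to re-express the result by combining the Murphy straightening relations in each $k\Sigma_{\omega_r}$ (which lift to the factors of $H^m_\omega$ via the idempotents $T_i^r$) with bookkeeping on the wreath component; the key observation is that the dominance order on each individual $\lambda^r$, together with the composition component ordering giving higher $[\omega_r]$, combines exactly into $\unlhd_n$ as defined.

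Once the cellular basis and its straightening are in place, identifying $\mathbf{S}(\lambda)$ with the induced module stated in the proposition is essentially formal: by construction, the left cell module is spanned by the images of basis elements with a fixed right index modulo the ideal generated by strictly dominating labels, and this span is precisely a set of coset transversals acting on the twisted tensor product $S(\lambda^0)^{(0)}\otimes\cdots\otimes S(\lambda^{m-1})^{(m-1)}$, which is the Frobenius-reciprocity description of $(S(\lambda^0)^{(0)}\otimes\cdots\otimes S(\lambda^{m-1})^{(m-1)})\!\uparrow^{H^m_n}_{H^m_{|\lambda|}}$.
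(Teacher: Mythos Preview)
The paper does not supply its own proof of this proposition: it is stated with attribution to \cite[Section~5]{gl} and then used without further argument. Your outline is essentially a reconstruction of the Graham--Lehrer approach in that reference (Murphy-type basis on each symmetric-group factor, twisted by the primitive idempotents $T_i^r$ of the abelian part, transported along coset representatives for $\Sigma_\omega\backslash\Sigma_n$), so there is no genuine divergence to discuss. One small point: your description of the anti-involution as ``top-to-bottom flip combined with inversion of each label'' is the correct one for $H^m_n$, and is consistent with the $\ast$ used later in the paper for $B^m_n$.
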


We call $\mathbf{S}(\lambda)$ the Specht module for $H_n^m$ labelled
by $\lambda$.

\medskip

It is well known (see for example \cite{dm}) that the algebra $H_n^m$
is Morita equivalent to the direct sum of group algebras of Young
subgroups of $\Sigma_n$. These arise as idempotent subalgebras of
$H_n^m$.  Indeed, the idempotent subalgebra $\pi_\omega
H_n^m\pi_\omega$ is isomorphic to $k\Sigma_\omega$ and under this
isomorphism we have
\begin{equation}\label{omegaspecht}
\pi_\omega \mathbf{S}(\lambda)\cong \left\{\begin{array}{ll}
S(\lambda^0)\otimes S(\lambda^1)\otimes \ldots \otimes
S(\lambda^{m-1}) & \mbox{if $|\lambda|=\omega$}\\ 0 &
\mbox{otherwise.}\end{array}\right.
\end{equation}

\section{Cell modules for $B^m_n$}\label{section3} 

In this section we show that $B_n^m$ is an iterated inflation (in the
sense of \cite{kxbrauer}), and so is a cellular algebra. We recall the
construction of the cell modules and study the restriction and
induction rules for these. When the algebras are
  quasi-hereditary we obtain a tower of recollement (in the sense of
  \cite{cmpx}). 

\subsection{}\textbf{Iterated inflation and cell modules}

\begin{defn}  Suppose that $n,l\in\NN$ with $l\leq
  \lfloor n/2\rfloor$.  An $(n,l)$-dangle is a partition of $\{1 ,
  \ldots, n\}$ into $l$ two-element subsets (called arcs) and $n-2l$
  one-element subsets (called free nodes).  An $(m,n,l)$-dangle is an
  $(n,l)$-dangle to which an integer $r \in \ZZ/m\ZZ$ has been
  assigned to every subset of size 2.
\end{defn}

We can represent an $(n,l)$-dangle $d$ by a set of $n$ nodes labelled
by the set $\{1, \ldots, n\}$, where there is an arc (denoted
$v_{ij}$) joining $i$ to $j$ if $\{i,j\} \in d$, and there is a
vertical line starting from $i$ if $\{i\} \in d$.  An $(m,n,l)$-dangle
can be represented graphically by first labelling each
arc of the underlying $(n,l)$-dangle and then giving it the following
orientation: we let all one element sets have a downward orientation
and all two element sets have a right orientation.  An example of an
$(m,7,3)$ for $m\geq 3$ is given in Figure \ref{dangle}.

\begin{figure}[ht]
\centerline{
\begin{minipage}{54mm}
\def\objectstyle{\scriptstyle}
\xymatrix@=2pt{
\circ  \ar@{->}[ddd]		&&\circ  \ar@{->}[ddd]
&&\circ\ar@{->}@/_.5pc/[rrrr]|\hole|{\textbf{2}}|\hole	&&\circ
\ar@{->}@/_.5pc/[rrrr]|\hole|{\textbf{1}}|\hole		&&\circ
&&\circ	&&\circ \ar@{->}[ddd]   \\ 
&&&&&&&&&&&\\	
&&&&&&&&&&&&\\
&&&&&&&&&&&&\\
}
\end{minipage}}
\caption{An $(m,7,3)$ dangle.}
\label{dangle}
\end{figure}
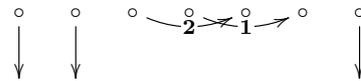

We let $V(m,n,l)$ denote the vector space spanned by all $(m,n,l)$-dangles.  

Let $\Lambda(m,n)$ denote the set of $m$-partitions of $n-2l$ for all
$l \leq \lfloor n/2\rfloor$.  The dominance ordering extends naturally
to this set by writing $\lambda \unrhd \mu$ if and only if either
$\sum_{i=0}^{m-1}|\lambda^i| > \sum_{i=0}^{m-1}|\mu^i|$ or
$\sum_{i=0}^{m-1}|\lambda^i| = \sum_{i=0}^{m-1}|\mu^i|=n-2l$ (for some
$l$) and $\mu {\unlhd_{n-2l}} \lambda$.

Each $(m,n)$-diagram in $B_n^m$ with $n-2l$ through strands can be
decomposed as two $(m,n,l)$-dangles, giving the top and bottom of the
diagram, and an element of $\ZZ/m\ZZ\wr \Sigma_{n-2l}$ giving the
through strands. Using this decomposition we get the following result.

\begin{thm}\label{cellstrattypec}
The cyclotomic Brauer algebra $B_{n}^m(\delta)$ is an iterated
inflation with inflation decomposition
\begin{align*}
B_{n}^m(\delta) = \bigoplus^{\lfloor n/2\rfloor}_{l=0} V(m,n,l)
\otimes V(m,n,l) \otimes H^m_{n-2l}. 
\end{align*}
Therefore $B^m_n$ is cellular with respect to the dominance ordering
on $\Lambda(m,n)$, and the anti-involution $\ast$ given by reflection
of a diagram through its horizontal axis.
\end{thm}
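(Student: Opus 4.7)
The plan is to follow König--Xi's iterated inflation framework \cite{kxbrauer}, using the number of through strands as the stratifying index and treating the orientations/labels as decoration of the underlying Brauer combinatorics.

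\medskip

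First I would establish the vector space decomposition directly. For a reduced $(m,n)$-diagram $d$ with exactly $n-2l$ through strands, reading off the arcs on the top row (with their $\mathbb{Z}/m\mathbb{Z}$-labels) produces an $(m,n,l)$-dangle $u$, reading off the bottom row produces an $(m,n,l)$-dangle $v$, and the through strands, once the labels on arcs have been fixed, define an element $h \in H^m_{n-2l}$ given by the induced permutation together with the labels on the through strands. This assignment $d \mapsto u \otimes v \otimes h$ is a bijection between reduced diagrams with $n-2l$ through strands and basis vectors of $V(m,n,l)\otimes V(m,n,l)\otimes H^m_{n-2l}$, giving the asserted direct sum decomposition as vector spaces.

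\medskip

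Next I would verify the iterated inflation axioms. Let $J_{\geq l}$ denote the span of diagrams with at most $n-2l$ through strands. Concatenating two diagrams never increases the number of through strands, so $J_{\geq l}$ is a two-sided ideal and the filtration $B_n^m=J_{\geq 0}\supseteq J_{\geq 1}\supseteq\cdots$ is a chain of ideals. Working modulo $J_{\geq l+1}$, the multiplication of two diagrams $d_1=u_1\otimes v_1\otimes h_1$ and $d_2=u_2\otimes v_2\otimes h_2$ of through-rank $n-2l$ gives a diagram of through-rank $n-2l$ precisely when the concatenation of $v_1$ with $u_2$ produces no additional arcs among the middle through strands; in that case the product has top $u_1$, bottom $v_2$, and through-strand element $h_1\,\Phi(v_1,u_2)\,h_2$ in $H^m_{n-2l}$, where $\Phi(v_1,u_2)\in H^m_{n-2l}$ encodes the permutation and sum of labels of the strands in the middle, multiplied by a scalar from $k$ coming from the closed loops (whose labels contribute the appropriate $\delta_i$-factors by the multiplication rule of Section~1). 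Thus on the associated graded layer the multiplication takes the shape
\begin{align*}
(u_1\otimes v_1\otimes h_1)\cdot (u_2\otimes v_2\otimes h_2) = u_1\otimes v_2\otimes \bigl(h_1\,\Phi(v_1,u_2)\,h_2\bigr),
\end{align*}
which is exactly the multiplication in an inflation of $H^m_{n-2l}$ along the bilinear form $\Phi\colon V(m,n,l)\otimes V(m,n,l)\to H^m_{n-2l}$.

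\medskip

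The main obstacle is controlling the $\mathbb{Z}/m\mathbb{Z}$-labels in the middle: unlike the classical Brauer case, the orientations may be chosen in several ways when defining $\Phi$, and one must check that the map $\Phi$ is well-defined and that the conventions for orienting closed loops (the leftmost-central-node rule from Section 1) make the $\delta_i$ contributions consistent with the identification $d\leftrightarrow u\otimes v\otimes h$. This is done by fixing, once and for all, a canonical orientation on each dangle and each through-strand configuration, and then verifying that any alternative choice differs by an element of $\mathbb{Z}/m\mathbb{Z}$ which is absorbed into $h$ or into the scalar attached to a closed loop. Once this bookkeeping is in place, the axioms of \cite[Definition~3.1]{kxbrauer} are straightforward to verify.

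\medskip

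Finally, cellularity follows from König--Xi's theorem: an iterated inflation of cellular algebras along anti-involution-compatible bilinear forms is cellular, with cell poset the disjoint union over $l$ of the cell poset of $H^m_{n-2l}$. By the proposition in Section 2.3 the latter is the set of $m$-partitions of $n-2l$ with the dominance order, and taking the union over $l$ with the extended order gives exactly $\Lambda(m,n)$ with the dominance ordering defined above. The horizontal reflection $\ast$ on diagrams fixes the through-strand element up to the canonical anti-involution of $H^m_n$ while swapping the top and bottom dangles, and therefore is compatible with the inflation on each layer, providing the cellular anti-involution.
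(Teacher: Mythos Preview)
Your proposal is correct and follows exactly the approach the paper intends: the paper's own proof is simply ``This follows by standard arguments, see for example \cite{kxbrauer}'', and the decomposition of a diagram into two dangles plus an element of $H^m_{n-2l}$ is stated just before the theorem. What you have written is a faithful expansion of those standard arguments, including the necessary bookkeeping on orientations and labels that the paper leaves implicit.
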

\begin{proof}
This follows by standard arguments, see for example
\cite{kxbrauer}.
\end{proof}

For any $m$-partition $\lambda$ of $n-2l$, we define an action of the
algebra $B_n^m$ on the vector space $V(m,n,l) \otimes
\mathbf{S}(\lambda)$. For any $(m,n)$-diagram $X$, any
$(m,n,l)$-dangle $d$ and any element $x\in \mathbf{S}(\lambda)$ we
define $X(d\otimes x)$ as follows. Place the diagram $X$ above the
$(m,n,l)$-dangle $d$. Choose a compatible orientation of the strands
and then concatenate to give an $(m,n,l+t)$-dangle $Xd$ and an element
$\sigma\in H^m_n$ acting on the free $n-2(l+t)$ nodes. If $t>0$ then
we set $X(d\otimes x)=0$, and otherwise, we define $X(d\otimes
x)=(Xd)\otimes \sigma x$.

\begin{cor}
We have that the cell modules for $B_{n}^m(\delta)$ are of the form
\begin{align*}
 \Delta_n (\lambda) = V(m,n,l) \otimes 
\mathbf{S}(\lambda) 
\end{align*}
where $\mathbf{S}(\lambda)$ is the Specht module for $H^m_{n-2l}$
defined in Section 2.3.
\end{cor}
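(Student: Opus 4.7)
The plan is to deduce the corollary directly from the iterated inflation structure established in Theorem~2.1.3, using the general recipe of König--Xi for reading off cell modules from an inflation decomposition. First I would recall that if $A$ is an iterated inflation of cellular algebras $C_l$ along vector spaces $V_l$ (with appropriate bilinear form data), then the cell modules of $A$ are precisely of the form $V_l \otimes W$, where $W$ runs through the cell modules of $C_l$. In our situation the $l$-th layer algebra is the group algebra $H^m_{n-2l}$, whose cell modules are the Specht modules $\mathbf{S}(\lambda)$ of Section~2.3, indexed by $m$-partitions of $n-2l$. Combined, this identifies $\Delta_n(\lambda) = V(m,n,l) \otimes \mathbf{S}(\lambda)$ as a vector space and as a candidate cell module.

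Next I would verify that the $B^m_n$-action described in the paragraph preceding the corollary coincides with the action arising from the iterated inflation. Given an $(m,n)$-diagram $X$ and an $(m,n,l)$-dangle $d$, concatenation produces an $(m,n,l+t)$-dangle $Xd$ together with an element $\sigma \in H^m_{n-2(l+t)}$ acting on the through strands and a scalar (coming from closed loops). If $t>0$ the result lies in a layer indexed by a strictly larger $l$, hence in the cell ideal strictly below $\lambda$ in the dominance order (since increasing the number of arcs gives a strictly greater element of $\Lambda(m,n)$); modulo this ideal the action is zero, which is exactly the prescription $X(d \otimes x) = 0$. When $t = 0$, the formula $X(d \otimes x) = (Xd) \otimes \sigma x$ is precisely what one reads off from the inflation decomposition, with $\sigma$ acting on $\mathbf{S}(\lambda)$ through the $H^m_{n-2l}$-module structure.

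The main obstacle will be the careful bookkeeping around orientations and loop labels: one must check that the scalar contributions from closed loops (the $\overline{\delta}_r$-type factors implicit in the concatenation) are absorbed into $\sigma \in H^m_{n-2l}$ consistently, and that the action does not depend on the choice of compatible orientation. Once this is established, the action descends to a well-defined $B^m_n$-module structure on $V(m,n,l) \otimes \mathbf{S}(\lambda)$ compatible with the cellular structure of Theorem~2.1.3, and the identification of $\Delta_n(\lambda)$ follows. I would conclude by citing the König--Xi description of cell modules of iterated inflations (as in \cite{kxbrauer}) to package the above into a formal statement, noting that the cellular datum uses the anti-involution $\ast$ of Theorem~2.1.3 and the dominance order on $\Lambda(m,n)$.
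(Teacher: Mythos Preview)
Your proposal is correct and follows essentially the same route as the paper: the corollary is stated there without proof, as an immediate consequence of the iterated inflation decomposition of Theorem~\ref{cellstrattypec} via the general König--Xi machinery \cite{kxbrauer}, and you have simply made this deduction explicit. One small slip: increasing the number of arcs (larger $l$) gives an $m$-partition of smaller total size, which is \emph{lower} in the dominance order on $\Lambda(m,n)$, not greater; and the loop scalars are the $\delta_i$ rather than the $\overline{\delta}_r$ at this stage --- but neither point affects the argument.
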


\subsection{}\textbf{Tower of algebras, restriction and induction} 

Let $n\geq 2$. Suppose first that $\delta\neq 0\in k^{m}$ and fix a
$\delta_r \neq 0$ for some $0\leq r\leq m-1$. We then define the
idempotent $e_{n-2}=\frac{1}{\delta_r}t_{n-1}^re_{n-1,n}$ 
as illustrated in Figure \ref{idemis}. Note that
it is a scalar multiple of a diagram with $n-2$ through strands. 
If $\delta =0$ and $n\geq 3$ then we define $e_{n-2}$ to be the
diagram with strands given by $\{i, \overline{i}\}$ for all $1\leq
i\leq n-3$, $\{n-2, \overline{n}\}$, $\{n-1,n\}$ and
$\{\overline{n-2}, \overline{n-1}\}$, as illustrated in
  Figure \ref{altidem}.

\begin{figure}[ht]
\includegraphics[width=4.0cm]{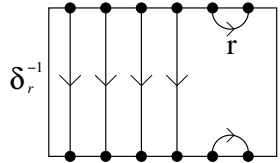}
\caption{The idempotent $e_{n-2}$ when $\delta_r\neq 0$.}
\label{idemis}
\end{figure}

\begin{figure}[ht]
\includegraphics[width=3.0cm]{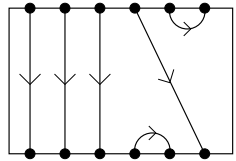}
\caption{The idempotent $e_{n-2}$ when $\delta=0$ and $n\geq 3$.}
\label{altidem}
\end{figure}

It is easy to see that 
\begin{equation}\label{eAe}
e_{n-2}B_n^me_{n-2}\cong B_{n-2}^m
\end{equation} 
and
\begin{equation}\label{AeA}
B_n^m/B_n^m e_{n-2}B_n^m \cong H_n^m
\end{equation}
just as for the Brauer algebra \cite[Lemma 2.1]{cdm}. In particular,
any $H_n^m$-module can naturally be viewed as a $B_n^m$-module.

Via (\ref{eAe}) we define 
an exact localisation functor through the idempotent
$e_{n-2}$.  
\begin{align*} F_n \, :\, B_{n}^m\mbox{\rm -mod}
  &\longrightarrow B_{n-2}^m\mbox{\rm -mod}\\ M &\longmapsto e_{n-2} M
  \intertext{and a right exact globalisation functor} G_n\, : \,
  B_{n}^m\mbox{\rm -mod} &\longrightarrow B_{n+2}^m\mbox{\rm -mod}\\ M
  &\longmapsto B_{n+2}^me_{n} \otimes_{B_{n}^m}M.
\end{align*}
Note that $F_{n+2}G_n(M)\cong M$ for all $M\in B_{n}^m\mbox{\rm
  -mod}$, and hence $G_n$ is a full embedding. It is easy to check
that for any $\lambda\in \Lambda(m,n)$ we have  
\begin{equation}\label{localisation}
F_n(\Delta_n(\lambda)) \cong \left\{ \begin{array}{ll}
  \Delta_{n-2}(\lambda) & \mbox{if $\lambda \in \Lambda(m,n-2)$}\\ 0 &
  \mbox{otherwise}.\end{array}\right.
\end{equation}
We have, for any
$H_n^m$-module, that
$$B_{n+2}^me_n\otimes_{B_n^m}M\, \cong \, V(m,n+2,1)\otimes_k M$$ (see
\cite[Proposition 4.1]{hhkp}). In particular, if
$\lambda$ is an $m$-partition of $n-2l$, we have
$$\Delta_n(\lambda) = G_{n-2}G_{n-4}\ldots G_{n-2l}\mathbf{S}(\lambda)$$
and hence
\begin{equation}\label{GDelta}
G_n\Delta_n(\lambda)=\Delta_{n+2}(\lambda).
\end{equation}

\begin{lem}\label{A3} For each $n\geq 1$, the algebra
  $B^m_{ n}$ can be identified as a subalgebra of
  $B^m_{ n+1}$ via the homomorphism which takes an $(m,n)$-diagram
  $X$ in $B^m_{ n}$ to the $(m,n+1)$-diagram in $B^m_{ n+1}$
  obtained by adding two vertices $n+1$ and
  $\overline{n+1}$ with a strand between them labelled by zero. 
 \end{lem}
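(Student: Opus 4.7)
The plan is to define the map $\phi\colon B^m_n\to B^m_{n+1}$ on the spanning set of reduced $(m,n)$-diagrams by the prescription in the statement (append a through strand from $n+1$ to $\overline{n+1}$ labelled by $0$) and extend linearly. The result on each basis diagram is still a reduced $(m,n+1)$-diagram, because appending an isolated through strand creates no closed loops and does not alter the pairing of the original nodes. Hence $\phi$ is well-defined as a $k$-linear map.

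The first thing I would check is that $\phi$ is multiplicative on the diagram basis. Given two reduced $(m,n)$-diagrams $X,Y$, the product $\phi(X)\cdot\phi(Y)$ is computed by concatenating the two enlarged diagrams, choosing a compatible orientation, summing labels along each resulting strand, and removing closed loops with the appropriate scalar. The two new through strands at position $n+1$ in $\phi(X)$ and $\phi(Y)$ match up into a single through strand from $n+1$ to $\overline{n+1}$ whose total label is $0+0=0$; this introduces no extra closed loop and no extra scalar. On the remaining $n$ strands, the concatenation is exactly the concatenation used to compute $X\cdot Y$ in $B^m_n$, producing the same collection of strands, the same labels, and the same closed loops with the same scalars $\delta_i$. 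Therefore $\phi(X)\cdot\phi(Y)=\phi(X\cdot Y)$, and by linearity $\phi$ is an algebra homomorphism. It also preserves the identity, since the identity diagram of $B^m_n$ is sent to the identity diagram of $B^m_{n+1}$.

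Injectivity is then immediate: the images $\phi(X)$ of distinct reduced $(m,n)$-diagrams $X$ are distinct reduced $(m,n+1)$-diagrams, because one can recover $X$ from $\phi(X)$ unambiguously by deleting the strand between $n+1$ and $\overline{n+1}$. Since $\phi$ sends a basis to a linearly independent set, it is injective, so $B^m_n$ embeds into $B^m_{n+1}$ via $\phi$.

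I do not anticipate a real obstacle here; the only point that needs a modicum of care is making sure that the appended strand never interacts with the rest of the diagram during concatenation — which is immediate since it is a through strand isolated at the rightmost position and labelled by the identity $0\in\ZZ/m\ZZ$, so no additional loops, label corrections, or $\delta_i$ factors arise beyond those already produced in the original product $X\cdot Y$.
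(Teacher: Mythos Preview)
Your proof is correct and is exactly the natural argument; the paper itself states this lemma without proof, treating it as immediate from the diagrammatic description of the multiplication. Your write-up is a faithful and careful unpacking of that omitted verification.
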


Lemma \ref{A3} implies that we can consider the usual restriction and
induction functors.  We refine
these functors as a direct sum of signed versions.  This refinement is
given by inducing via
 \begin{align*}
 &B^m_{n-1} \subset B^m_{n-1} \otimes B_1^m \subset B_n^m,
 \end{align*}
where $B_1^m \cong k(\ZZ/m\ZZ)$  and corresponds to the
  rightmost string in the diagrams. Note that for any $B_n^m$-module $M$
we have that $T_n^rM$ is naturally a $B_{n-1}^m$-module. In fact, it
is given by the summand of $M\!\!\downarrow_{B_{n-1}^m\otimes B_1^m}$ on
which $B_1^m$ acts by $\xi^r$. So we can define the following signed
induction and restriction functors.
\begin{align*} 
\xi^r\!\text{-}\!\res_n \, :\, &B^m_{ n}\mbox{\rm -mod}
\longrightarrow  B^m_{ n-1}\mbox{\rm -mod}\\ 
& M \longmapsto T_n^rM\!\downarrow_{B^m_{n-1}}
\intertext{
and  }
\xi^r\!\text{-}\!\ind_n\, : \,  &B^m_{ n}\mbox{\rm -mod}
\longrightarrow  B^m_{ n+1}\mbox{\rm -mod}\\ 
&M \longmapsto  \ind^{ B^m_{ n} }_{ B^m_{ n-1}\otimes B^m_1} (M
\boxtimes kT_n^r) . 
\end{align*}

We can relate these functors to globalisation and localisation, as in
\cite[(A4)]{cmpx}.
 
\begin{lem}\label{A4} (i) For all $n\geq 2$ we have that 
$$B^m_{ n}e_{n-2} \cong  B^m_{ n-1}$$ as a left $B^m_{ n-1}$, right
  $B^m_{ n-2}$-bimodule.\\
(ii) For all $B^m_{ n}$-modules
$M$ we have $$\xi^r\!\text{-}\!\res_{n+2}(G_n(M))\cong
\xi^{m-r}\!\text{-}\!\ind_n(M).$$ 
\end{lem}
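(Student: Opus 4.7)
For part (i), the plan is to exhibit the bimodule isomorphism at the level of diagrams. Since $e_{n-2}$ is (a scalar multiple of) a single $(m,n)$-diagram, $B^m_n e_{n-2}$ is spanned by those $(m,n)$-diagrams whose bottom matches that of $e_{n-2}$: an arc between $\overline{n-1}$ and $\overline{n}$, together with $n-2$ strand endpoints at $\overline{1},\ldots,\overline{n-2}$ (the $\delta=0$ case is analogous, with the arc and free positions appropriately shifted). A direct count then gives $\dim B^m_n e_{n-2} = m^{n-1}(2n-3)!! = \dim B^m_{n-1}$. I would then construct an isomorphism $\phi\colon B^m_{n-1}\to B^m_n e_{n-2}$ by a \emph{folding} operation: given an $(m,n-1)$-diagram $D$, produce the $(m,n)$-diagram whose bottom matches that of $e_{n-2}$ by reinterpreting $D$'s bottom node $\overline{n-1}$ as the top node $n$ of $B^m_n$, inverting the label on the affected strand to account for the reversal of orientation. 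Compatibility with the left $B^m_{n-1}$-action is automatic since this action touches only the top $n-1$ nodes, and compatibility with the right $B^m_{n-2}\cong e_{n-2}B^m_n e_{n-2}$ action follows by a similar inspection on the bottom $n-2$ nodes. The book-keeping of labels under orientation reversal is the only substantive check; otherwise the argument follows the standard iterated inflation pattern of \cite{kxbrauer}.

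For (ii), the plan is to unpack both sides as tensor products over $B^m_n$ and then reduce to (i). From the definitions of $G_n$ and $\xi^r\!\text{-}\!\res_{n+2}$,
\[
\xi^r\!\text{-}\!\res_{n+2}(G_n(M)) \;\cong\; T^r_{n+2}\,B^m_{n+2}e_n \otimes_{B^m_n} M
\]
as a $B^m_{n+1}$-module. On the other hand, decomposing $B^m_{n+1}$ into eigenspaces for the rightmost $B^m_1$-factor yields
\[
\xi^{m-r}\!\text{-}\!\ind_n(M) \;\cong\; B^m_{n+1}T^{m-r}_{n+1}\otimes_{B^m_n} M.
\]
It therefore suffices to establish the bimodule isomorphism
\[
T^r_{n+2}\,B^m_{n+2}e_n \;\cong\; B^m_{n+1}T^{m-r}_{n+1}.
\]
To do so, I would restrict the bimodule isomorphism $\psi\colon B^m_{n+2}e_n \cong B^m_{n+1}$ obtained in (i). Because $\psi$ folds the top node $n+2$ down to the bottom node $\overline{n+1}$, reversing orientation on the affected strand, left multiplication by $t^q_{n+2}$ corresponds to right multiplication by $t^{-q}_{n+1}$ on $B^m_{n+1}$. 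Summing with weights $\xi^{qr}$, left multiplication by $T^r_{n+2}=\tfrac{1}{m}\sum_q \xi^{qr}t^q_{n+2}$ corresponds to right multiplication by $\tfrac{1}{m}\sum_q \xi^{qr}t^{-q}_{n+1}=T^{-r}_{n+1}=T^{m-r}_{n+1}$. The desired identification then follows by restricting $\psi$ to the $T^r_{n+2}$-image.

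The main obstacle in both parts is precisely the careful tracking of labels under orientation reversal; this is exactly the mechanism responsible for the $r\leftrightarrow m-r$ duality appearing in (ii).
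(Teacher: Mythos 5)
Your argument is correct and is essentially the paper's own proof: part (i) is established by the same folding/unfolding bijection that moves the node $n$ to the freed position $\overline{n-1}$ (the paper states the inverse map, removing the bottom arc and re-pairing $\overline{n-1}$), and part (ii) reduces, exactly as in the paper, to transporting $T^r$ through this bijection, where the orientation reversal converts left multiplication by $t^q$ into right multiplication by $t^{-q}$ and hence $T^r$ into $T^{m-r}$. Your explicit computation of this last step is in fact slightly more detailed than the paper's one-line appeal to the map from (i).
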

\begin{proof}
 (i) Every diagram in $B^m_{n}e_{n-2}$ has an edge between
  $\overline{n-1}$ and $\bar{n}$. Define a map from $B^m_{n}e_{n-2}$
  to $B^m_{n-1}$ by sending a diagram $X$ to the diagram with $2(n-1)$
  vertices obtained from $X$ by removing the line connecting
  $\overline{n-1}$ and $\bar{n}$ and the line from $n$ (labelled by
  $r$), and pairing the vertex $\overline{n-1}$ to
  the vertex originally paired with $n$ in $X$ (labelling this line
  with $r$ and preserving the orientation). It is easy to
  check that this gives an isomorphism.\\ 
(ii) We have to show that 
$$T_n^rB_n^me_{n-2}\otimes_{B_{n-2}^m}M \, \cong \,
  B_{n-1}^m\otimes_{B_{n-2}^m\otimes B_1^m}(M\boxtimes kT_{n-1}^{m-r}).$$
  The left hand side is spanned by all elements obtained from diagrams
  in $B_n^me_{n-2}$ by attaching the idempotent $T^r$ to node
  $n$. Following the map given in (i) gives the required isomorphism.
\end{proof}

\begin{rem}
Note that this construction will generalise to any tower of
recollement (as in \cite{cmpx}) where $A_{n-1}\otimes
A_1\subset A_n$, and $A_1$ admits a (non-trivial) direct sum decomposition.
\end{rem}

Given a family of modules $M_i$ we will write $\biguplus_i M_i$ to
denote some module with a filtration whose quotients are exactly the
$M_i$, each with multiplicity one. This is not uniquely defined as a
module, but the existence of a module with such a filtration will be
sufficient for our purposes.  
 
\begin{prop}\label{indres} 
(i) For  
$\lambda \in \Lambda({m, n})$ we have short exact sequences
$$0\longrightarrow  \!\!\!\!    \biguplus_{\square \in
    {\xi^{m-r}}\!\text{-}\!\remb(\lambda)}   \!\!\!\!
  \Delta_{n+1}(\lambda - \square) 
\longrightarrow {\xi^r}\!\text{-}\!\ind_n\, \Delta_n(\lambda)
\longrightarrow  \!\!\!\!    \biguplus_{\square \in
  {\xi^r}\!\text{-}\!\add(\lambda)}  \!\!\!\!   \Delta_{n+1}(\lambda +
\square)\longrightarrow 0$$ 
 and
$$0\longrightarrow  \!\!\!\!    \biguplus_{\square \in
  {\xi^{r}}\!\text{-}\!\remb(\lambda)}   \!\!\!\!    \Delta_{n-1}(\lambda
- \square) 
\longrightarrow {\xi^r}\!\text{-}\!\res_n\, \Delta_n(\lambda)
\longrightarrow  \!\!\!\!    \biguplus_{\square \in
  {\xi^{m-r}}\!\text{-}\!\add(\lambda)}  \!\!\!\!   \Delta_{n-1}(\lambda +
\square)\longrightarrow 0.$$
(ii) In each of the filtered modules which arise in (i), the
filtration can be chosen so that partitions labelling successive
quotients are ordered by dominance, with the top quotient maximal
among these. When $H^m_n$ is semisimple the $\biguplus$ all become
direct sums.
\end{prop}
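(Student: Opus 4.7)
The plan is to prove the restriction sequence directly from the iterated-inflation description $\Delta_n(\lambda)=V(m,n,l)\otimes \mathbf{S}(\lambda)$ of Theorem~\ref{cellstrattypec}, and then to deduce the induction sequence formally from Lemma~\ref{A4}(ii) together with (\ref{GDelta}).

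For the restriction, I would split the spanning set of $V(m,n,l)$ according to whether vertex $n$ of the dangle is a free node (so the attached strand of the diagram is a through strand) or an endpoint of an arc. A path-tracing calculation for the action of $B^m_{n-1}\subset B^m_n$ via the embedding in Lemma~\ref{A3}, mirroring the classical Brauer computation \cite[Prop.~4.2]{cdm}, shows that the first of these spans a $B^m_{n-1}$-submodule and the second the complementary quotient. Unwinding the identifications, the submodule is isomorphic to $V(m,n-1,l)\otimes \mathbf{S}(\lambda)$ with the extra string at $n$ carrying a $\ZZ/m\ZZ$-label, while the quotient is isomorphic to $V(m,n-1,l-1)\otimes \mathbf{S}(\lambda)$ with the extra string carrying a labelled and oriented arc hanging off the right.

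Next I would apply the signed refinement $T_n^r$. On the submodule the string at $n$ is a genuine through strand, so its projection onto the $\xi^r$-eigenspace reduces, via the Morita equivalence $\pi_\omega H^m_n\pi_\omega\cong k\Sigma_\omega$ recalled in (\ref{omegaspecht}) and classical symmetric-group branching, to the filtration of $\mathbf{S}(\lambda)\!\downarrow^{H^m_n}_{H^m_{n-1}}$ by the modules $\mathbf{S}(\lambda-\square)$ with $\square\in \xi^r\text{-}\remb(\lambda)$; tensoring with $V(m,n-1,l)$ produces the asserted subobject. On the quotient the string at $n$ is part of an oriented labelled arc, and reading this arc with the reversed orientation inverts its $\ZZ/m\ZZ$-label, so the $T_n^r$-eigenspace picks out those arcs originally labelled $m-r$; the same $H^m$-branching argument then yields the quotient $\biguplus_{\square \in \xi^{m-r}\text{-}\add(\lambda)}\Delta_{n-1}(\lambda+\square)$. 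For the induction sequence, Lemma~\ref{A4}(ii) combined with (\ref{GDelta}) gives
\[
\xi^{m-r}\text{-}\ind_n\,\Delta_n(\lambda)\;\cong\;\xi^r\text{-}\res_{n+2}\,\Delta_{n+2}(\lambda),
\]
so the induction sequence follows by applying the restriction sequence just proved to $\Delta_{n+2}(\lambda)$ and substituting $r\leftrightarrow m-r$.

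For part (ii) the dominance ordering within each $\biguplus$ is inherited from the cellular filtration on $\mathbf{S}(\lambda)\!\downarrow$ provided by \cite{gl}, with the dominance-maximal quotient on top; tensoring with $V(m,n\pm1,\cdot)$ preserves this order, and the two halves of the short exact sequence sit in the correct relative order because removing a box strictly decreases $|\mu|$ while adding one strictly increases it. When $H^m_n$ is semisimple, symmetric-group Specht modules decompose as direct sums on restriction, so the signed $H^m$-branching is also a direct sum and each $\biguplus$ becomes one. The main obstacle I expect is the sign bookkeeping in the arc-endpoint half of the restriction: one has to verify carefully that reversing the orientation of the arc at $n$ swaps the $\xi^r$-eigenspace of $T_n^r$ with $\xi^{m-r}$-labelled arc data, and that the identification of the arc-endpoint quotient with $V(m,n-1,l-1)\otimes \mathbf{S}(\lambda)$ is compatible with both the $B^m_{n-1}$-action and this signed projection. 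All other ingredients are either formal (Lemma~\ref{A4}) or reduce to classical symmetric-group branching via the Morita equivalence.
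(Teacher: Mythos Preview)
Your proposal is correct and follows essentially the same route as the paper: split $T_n^r\Delta_n(\lambda)$ according to whether node $n$ is free or an arc endpoint, identify the two pieces via $H^m$-branching, and then deduce the induction sequence from the restriction sequence using Lemma~\ref{A4}(ii) and (\ref{GDelta}). The only place the paper is more explicit than you is the arc-endpoint quotient, which it identifies concretely as $V(m,n-1,l-1)\otimes \ind_{H^m_{n-2l}\otimes H^m_1}^{H^m_{n-2l+1}}(\mathbf{S}(\lambda)\boxtimes kT^{m-r})$ via an explicit permutation $\sigma_i$ of the free vertices (and cites \cite{matspecht} rather than the Morita equivalence for the $H^m$-branching); this is exactly the step you flagged as the delicate one.
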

 
\begin{proof} We prove the result for the functor $\xi^r\!\text{-}\!\res_n$.
The result for $\xi^r\!\text{-}\!\ind_n$ then follows immediately from
Proposition \ref{A4}(ii) and (\ref{GDelta}).

We let $W$ be the
subspace of $\xi^r\!\text{-}\!\res_n\, \Delta_n(\lambda)$ spanned by
all elements of the form $T_n^rd\otimes x$ with $d\in V(m,n,l), x\in
\mathbf{S}(\lambda)$ such that the node $n$ is free in $d$.  It is
clear that this subspace is a $B^m_{ n-1}(\delta)$-submodule.  We
shall prove that $W= \biguplus_{\square \in
  {\xi^{r}}\!\text{-}\!\remb(\lambda)}\Delta_{n-1}(\lambda - \square)
$.

By the restriction rules for cyclotomic Hecke algebras (see
\cite{matspecht} for details), it will be enough to show that 
$$W\cong V(m,n-1,l) \otimes
\xi^r\!\text{-}\!\res_{n-2l}\mathbf{S}(\lambda)$$ where
$\xi^r\!\text{-}\!\res_{n-2l}\mathbf{S}(\lambda)=T_{n-2l}^r\mathbf{S}(\lambda)$
viewed as a $H_{n-2l-1}^m$-module. The map sending $T_n^rd\otimes x$
to $\phi(d)\otimes T_{n-2l}^rx$ where $\phi(d)$ is the
$(m,n-1,l)$-dangle obtained from $d$ by removing node $n$ is clearly
an isomorphism.

We will now show that 
\begin{align*}
U= T_n^r\Delta(\lambda)/W \cong V(m,n-1,l-1)\otimes
(\ind_{H_{n-2l}^m\otimes
  H_1^m}^{H_{n+1-2l}^m}(\mathbf{S}(\lambda)\boxtimes kT^{m-r}))
\end{align*}
which gives the required result using \cite{matspecht}.  Let $d$
be an $(m,n,l)$-dangle which has an arc from node $n$ to some other
node. Number the free vertices of $d$ and the node connected to $n$ in
order from left to right with the integers $1,\ldots n+1-2l$. Say that
the node connected to $n$ is numbered with $i$. Define $\psi(d)$ to be
the $(m,n-1,l-1)$-dangle obtained from $d$ by removing the arc
$\{i,n\}$ and deleting the node $n$ (so that $i$ becomes a free
node). And define the permutation $\sigma_i=(i,n-2l+1,
n-2l,n-2l-1,\ldots i+1)\in \Sigma_{n-2l+1}$.  This element is obtained
by pulling down node $n$ in $d$ (as in the proof of Proposition
\ref{A4}(i)), giving the permutation $\sigma_i$ of the free vertices
$\{1,2,\ldots , n-2l+1\}$.  Now the map sending $T_n^rd\otimes x$ to
$\psi(d)\otimes (\sigma_i \otimes_{H_{n-2l}^m\otimes
  H_1^m} (x\otimes T_{n-2l+1}^{m-r})$ gives the required
isomorphism.
\end{proof} 

We have seen that the induction and restriction
functors for $B^m_n$ decompose into signed versions.  We saw in Lemma
\ref{A4} and Proposition \ref{indres} that when $r\neq m-r$, the
$\xi^r$- and $\xi^{m-r}$-functors `pair-off' in a manner reminiscent
of those for the walled Brauer algebra.  In the case that $r=m-r$ we
saw that the $\xi^{r}$-functors behave like those of the classical
Brauer algebra. We will make this connection explicit in Section 5.

\subsection{}\textbf{Quasi-heredity}

A cellular algebra is quasi-hereditary if and only if it has the same
number of cell modules and simple modules, up to isomorphism.  Using
this and standard arguments for iterated inflations \cite{KXwhen}, we
deduce the following.

\begin{thm}\label{A2'}
Let $k$ be a field of characteristic $p\geq 0$, $m,n\in\NN$, and
$\delta \in k^m$.  If $n$ is even suppose $\delta \neq 0 \in k^m$.
The algebra $B^m_{n}(\delta)$ is quasi-hereditary if and only if
$p>n$ and $p$ does not divide $m$, or $p=0$. 
\end{thm}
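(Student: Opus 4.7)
The plan is to apply the criterion that a cellular algebra arising as an iterated inflation is quasi-hereditary if and only if every cellular bilinear form $\phi_\lambda$ on a cell module is non-zero \cite{KXwhen}. From Theorem \ref{cellstrattypec}, the cell module $\Delta_n(\lambda) = V(m,n,l) \otimes \mathbf{S}(\lambda)$ (with $\lambda$ an $m$-partition of $n-2l$) carries a form determined by the $H^m_{n-2l}$-valued bilinear pairing $\alpha_l : V(m,n,l) \otimes V(m,n,l) \to H^m_{n-2l}$ on the dangle space, composed with the Specht form on $\mathbf{S}(\lambda)$. Consequently, quasi-heredity of $B^m_n(\delta)$ is equivalent to $\alpha_l \neq 0$ together with semisimplicity of $H^m_{n-2l}$, for every $l \leq \lfloor n/2 \rfloor$.

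Next I would handle the group-algebra condition. Since $H^m_k = k[(\ZZ/m\ZZ) \wr \Sigma_k]$ has order $m^k k!$, Maschke's theorem gives semisimplicity iff $p = 0$ or $p \nmid m^k k!$. Demanding this for all $l \leq \lfloor n/2 \rfloor$ simultaneously is equivalent to $p = 0$, or $p > n$ and $p \nmid m$, which is precisely the characteristic condition in the theorem.

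For the non-vanishing of $\alpha_l$, I would use the idempotent $e_{n-2}$ together with the isomorphism (\ref{eAe}). The two constructions of $e_{n-2}$ in Figures \ref{idemis} and \ref{altidem} cover the cases $\delta \neq 0$ and ($\delta = 0$ with $n \geq 3$) respectively, and the hypothesis ``$\delta \neq 0$ when $n$ is even'' is exactly what is needed so that $e_{n-2}$ is available at every layer. One then propagates non-vanishing of $\alpha_l$ upward in $l$ using $e_{n-2}B^m_n e_{n-2} \cong B^m_{n-2}$ together with the compatibility (\ref{localisation})-(\ref{GDelta}) of cell modules with the localisation/globalisation pair. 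The main obstacle is this layer-by-layer verification: it is a direct adaptation of the classical Brauer-algebra argument of \cite[Lemma 2.1]{cdm}, but requires care because the loop-removal scalar now depends on the full tuple $(\delta_0, \ldots, \delta_{m-1})$ rather than a single parameter.
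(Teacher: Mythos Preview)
Your approach is essentially the same as the paper's, which simply invokes the criterion that a cellular algebra is quasi-hereditary if and only if the number of cell modules equals the number of simple modules, together with the ``standard arguments for iterated inflations'' from \cite{KXwhen}. You have spelled out what those standard arguments amount to: semisimplicity of each layer algebra $H^m_{n-2l}$ (yielding the characteristic condition) plus non-degeneracy of the dangle pairing $\alpha_l$ (handled via the idempotent $e_{n-2}$).

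One small point of precision: the condition you need on $\alpha_l$ is not merely that it be non-zero, but that its image in $H^m_{n-2l}$ contain the identity (or at least a unit). Mere non-vanishing of $\alpha_l$ would not, in general, force the composite form on $V(m,n,l)\otimes\mathbf{S}(\lambda)$ to be non-zero for \emph{every} $\lambda$. In the present situation this distinction is harmless, because the idempotent argument you outline does establish the stronger statement---under the hypothesis $\delta\neq 0$ (or $n$ odd) there exist dangles $v,w$ with $\alpha_l(v,w)$ equal to a unit multiple of the identity of $H^m_{n-2l}$---so the proof goes through.
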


\begin{ass}\label{qhass}
From now on, we will assume that $\delta\neq 0$
if $n$ is even and that $k$ satisfies the conditions in Theorem
\ref{A2'}, and so $B_n^m(\delta)$ is quasi-hereditary.
\end{ass}

The cell modules $\Delta_n(\lambda)$ are then the \textit{standard
  modules} for this quasi-hereditary algebras, and we will call them
so. Each standard module $\Delta_n(\lambda)$ has simple head
$L_n(\lambda)$ and the set
$$\{L_n(\lambda)\, : \, \lambda\in \Lambda(m,n)\}$$ form a complete
set of non-isomorphic simple $B_n^m$-modules. We denote by
$P_n(\lambda)$ the projective cover of $L_n(\lambda)$.  The results
  in this Section have shown

\begin{thm} Under Assumption \ref{qhass} the algebras $B_n^m(\delta)$
  form a tower of recollement.
\end{thm}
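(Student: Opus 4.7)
The plan is to verify each of the axioms (A1)--(A6) of a tower of recollement in the sense of \cite{cmpx} by pointing to the results already established in this section; no substantial new argument is required beyond assembling what we already have.

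Specifically, I would proceed as follows. Axiom (A1), that each $B_n^m(\delta)$ is quasi-hereditary, is precisely Theorem \ref{A2'} under Assumption \ref{qhass}. Axiom (A2), the localisation isomorphism $e_{n-2} B_n^m e_{n-2} \cong B_{n-2}^m$, is recorded in (\ref{eAe}), where $e_{n-2}$ is the idempotent constructed explicitly at the start of Section 3.2 (with the case distinction $\delta \neq 0$ versus $\delta = 0$ handled separately, the latter relying on $n \geq 3$ in line with the standing assumption). Axiom (A3), the tower of subalgebra embeddings $B^m_{n-1} \hookrightarrow B^m_n$, is Lemma \ref{A3}. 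The bimodule identification needed for axiom (A4), namely $B_n^m e_{n-2} \cong B_{n-1}^m$ as a $(B_{n-1}^m, B_{n-2}^m)$-bimodule, is Lemma \ref{A4}(i). The compatibility of the globalisation functor $G_n$ with standard modules, $G_n \Delta_n(\lambda) = \Delta_{n+2}(\lambda)$, is (\ref{GDelta}), and the behaviour of localisation is (\ref{localisation}); together these give the axiom linking standard modules with $F_n$ and $G_n$. The remaining axiom, which in \cite{cmpx} relates induction/restriction to globalisation/localisation, is provided in our setting by Lemma \ref{A4}(ii) (in its signed form).

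The main thing to be careful about, rather than any genuine obstacle, is that the framework of \cite{cmpx} is set up for a tower with a single induction/restriction pair, whereas we have a direct-sum decomposition into signed functors $\xi^r\text{-}\ind_n$ and $\xi^r\text{-}\res_n$. For the purposes of verifying the axioms one takes the total induction and restriction functors (summing over $r$); the signed refinement is additional structure, not an obstruction, as remarked after Lemma \ref{A4}. Similarly, one must note that the role of the quotient algebra $A_n / A_n e_{n-2} A_n$ in the abstract setup is played here by $H_n^m$ via (\ref{AeA}), and that the standard modules $\Delta_n(\lambda)$ with $|\lambda| = n$ — which coincide with the Specht modules $\mathbf{S}(\lambda)$ inflated along this quotient — sit at the top of the cell chain described in Theorem \ref{cellstrattypec}. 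Once these identifications are made, the conclusion is immediate.
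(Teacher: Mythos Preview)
Your proposal is correct and matches the paper's approach exactly: the paper does not give a separate proof but simply states ``The results in this Section have shown'' before the theorem, meaning that the axioms of \cite{cmpx} are verified by the results of Section~\ref{section3} (whose labels A2$'$, A3, A4 are chosen precisely to signal which axioms they establish). Your explicit catalogue of which result handles which axiom, and your remark that the signed functors sum to the usual induction and restriction, make this more explicit than the paper itself does, but there is no difference in substance.
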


\section{The cyclotomic poset and combinatorics of $B^m_n$}

\subsection{}\textbf{The cyclotomic poset}

Recall that $\Lambda(m,n)$ denotes the set of $m$-partitions of $n-2l$ for
all $l\leq\lfloor n/2\rfloor$.  We let $\Lambda|m,n|$ denote the set
of $m$-compositions of $n-2l$ for all $l\leq \lfloor n/2
\rfloor$. There is a many-to-one map $\Lambda(m,n) 
\to \Lambda|m,n|$ given by
$$(\lambda^0, \ldots, \lambda^{m-1})\mapsto(|\lambda^0|, \ldots,
|\lambda^{m-1}|).$$   For example
 the map $\Lambda(3,9) \to \Lambda|3,9|$ maps $((1^2), (2,1),(2,2))$ to
$(2,3,4)$.
 
For a given $m, n \in \mathbb{N}$ we define a partial ordering
$\preceq$ on $\Lambda|m,n|$.  For $m$-compositions $\omega=(\omega_0,
\omega_1, \ldots, \omega_{m-1})$ and $\omega'=( \omega_0', \omega_1',
\ldots , \omega_{m-1}')\in 
\Lambda|m,n|$, we say $\omega \preceq \omega'$ if and only if
\begin{itemize}
\item[(i)] $\omega_r \leq \omega_r'$ for all $0\leq r\leq m-1$
\item[(ii)] $\omega_r-\omega_r' = \omega_{m-r}-\omega_{m-r}'$ for $r \neq 0, m/2$
\item[(iii)] $\omega_r - \omega_r' \in
2\ZZ$ for $r=0,m/2$.   
\end{itemize}
Any irreducible component in the Hasse diagram of this poset has a
unique minimal element.

Let $\omega \preceq \omega '$, with $a_r=\omega_r -\omega'_r$ for $r\neq
m-r$ and $a_r = (\omega_r-\omega'_r)/2$ for $r=m-r$ (note that $a_r=a_{m-r}$ by
assumption).  We write $\omega \preceq_{\sum a_r\xi^r} \omega'$
where the sum is over all $0\leq r \leq \lfloor m/2\rfloor$.
 
\begin{eg} 
The diagram in Figure \ref{poset1} is an irreducible component of the
Hasse diagram of the poset $(\Lambda|3,6|, \preceq)$.  We have annotated
the edges with the relevant signs.  We see that $(0,0,0)
\preceq_{1+\xi}(1,1,2)$.  Taking the pre-image in $\Lambda(3,6)$, we
see that $(0,0,0) \preceq_{1+\xi} ((1),(1),(2))$ and $(0,0,0)
\preceq_{1+\xi} ((1),(1),(1^2))$.
 \end{eg} 

\begin{figure}[ht]
\centerline{
 \begin{minipage}{54mm}
\def\objectstyle{\scriptstyle}
\xymatrix@=2pt{
(3,3,0)\ar@{-}[ddrr]|\xi  		&&		&&(2,2,2)
  \ar@{-}[ddrr]|\xi  \ar@{-}[ddll]|1 		&&
  &&(1,1,4)	\ar@{-}[ddrr]|\xi  \ar@{-}[ddll]|1 	&&
  &&(0,0,6)	   \ar@{-}[ddll]|1 	\\&\\ 
			&&(2,2,0)	\ar@{-}[ddrr]|\xi	&&
  &&(1,1,2)	\ar@{-}[ddrr]|\xi  \ar@{-}[ddll]|1
  &&&&(0,0,4)\ar@{-}[ddll]|1  \\&\\ 
			&&&&(1,1,0) && &&(0,0,2)  \\&\\
			&&&&&&(0,0,0)\ar@{-}[uurr]|1  \ar@{-}[uull]|\xi
}\end{minipage}
}
\caption{Part of the Hasse poset.}
\label{poset1}
\end{figure}
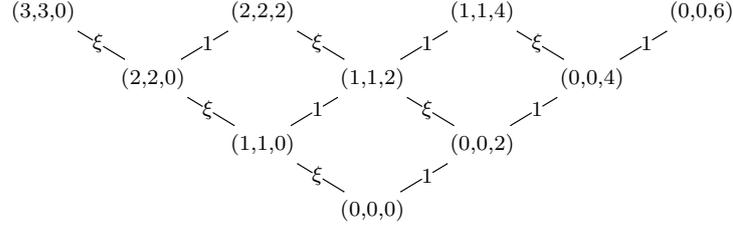
 
\begin{prop}\label{parity}
If $\lambda, \mu \in \Lambda(m,n)$ are such that
$[\Delta_n(\mu):L_n(\lambda)]\neq 0$, then we must have that $|\mu| \preceq
|\lambda|$.  
\end{prop}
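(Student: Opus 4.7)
The plan is to deduce the result by an idempotent truncation argument using the elements $\pi_\omega$ from Definition \ref{piomega}, regarded as idempotents of $B^m_n$ via $H^m_n\subset B^m_n$.  The central technical claim I would establish is that, for every $\nu\in\Lambda(m,n)$ and every $m$-composition $\omega$ of $n$,
$$\pi_\omega\,\Delta_n(\nu)\neq 0 \iff |\nu|\preceq\omega.$$
A standard cellular-algebra argument applied to the truncation $\pi_\omega B^m_n\pi_\omega$, which is itself cellular with cell modules the non-zero $\pi_\omega\Delta_n(\nu)$, then gives the same criterion for $\pi_\omega L_n(\nu)\neq 0$, since $\pi_\omega L_n(\nu)$ arises as the simple head of the corresponding cell module of the truncation.

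To prove the equivalence I would analyse the action of $\pi_\omega=\prod_i T_i^{r_i}$ on $\Delta_n(\nu)=V(m,n,l)\otimes\mathbf{S}(\nu)$ directly.  Since $T_i^r$ projects onto the $\xi^r$-eigenspace of $t_i$, and an arc from $i$ to $j$ labelled by $s$ (with a fixed orientation) is an eigenvector of $t_i$ with eigenvalue $\xi^s$ and of $t_j$ with eigenvalue $\xi^{-s}$, the product $T_i^{r_i}T_j^{r_j}$ annihilates the arc unless $\{r_i,r_j\}=\{s,m-s\}$.  Each arc therefore contributes one unit each to $\omega_s$ and $\omega_{m-s}$ when $s\notin\{0,m/2\}$, or two units to $\omega_s$ when $s\in\{0,m/2\}$, while the remaining free nodes must carry colours whose composition equals $|\nu|$ by (\ref{omegaspecht}).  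The vector $\omega-|\nu|$ is thus exactly the total arc-contribution vector, whose built-in structure realises conditions (ii) and (iii) of $\preceq$; condition (i) is just the non-negativity of the contribution.  Conversely, any $\omega\succeq|\nu|$ can be realised by a dangle with matching arc labels, yielding a non-zero element of $\pi_\omega\Delta_n(\nu)$.

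The proposition then follows by exactness of the functor $\pi_\omega(-)$.  The case $m=1$ is immediate from the cell ordering, so I assume $m\geq 2$ and $[\Delta_n(\mu):L_n(\lambda)]\neq 0$.  For every $\omega\succeq|\lambda|$ we have $\pi_\omega L_n(\lambda)\neq 0$, so $\pi_\omega\Delta_n(\mu)\neq 0$ and hence $|\mu|\preceq\omega$.  To extract condition (i) at a chosen coordinate $r_0$, I would take $\omega$ with $\omega_{r_0}=|\lambda|_{r_0}$, distributing the $2l_\lambda$ required arc-contribution units over the remaining coordinates; this is always possible for $m\geq 2$.  The resulting relation $|\mu|\preceq\omega$ yields $|\mu|_{r_0}\leq|\lambda|_{r_0}$.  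The pairing $c_r=c_{m-r}$ and parity $c_0,c_{m/2}\in 2\ZZ$ built into $c:=\omega-|\lambda|$, combined with the same structure on $\omega-|\mu|$, then propagate to give conditions (ii) and (iii) for $|\mu|$ versus $|\lambda|$, so that $|\mu|\preceq|\lambda|$.

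The main obstacle will be the diagrammatic equivalence itself: this needs careful bookkeeping of orientation conventions on dangles, of the precise interaction of the idempotents $T_i^r$ with arc labels, and of the slightly delicate self-dual cases $s\in\{0,m/2\}$ in which both endpoints of an arc are forced to share a single colour class.
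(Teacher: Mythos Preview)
Your central technical claim --- that $\pi_\omega\Delta_n(\nu)\neq 0$ if and only if $|\nu|\preceq\omega$ --- is correct and is exactly Proposition~\ref{omegastandard} of the paper (proved there by the same dangle analysis you outline). So the diagrammatic part of your programme is sound and independent of Proposition~\ref{parity}.

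The gap is in the passage from $\Delta$ to $L$. You assert that a ``standard cellular-algebra argument'' gives $\pi_\omega L_n(\nu)\neq 0$ whenever $\pi_\omega\Delta_n(\nu)\neq 0$, because $\pi_\omega L_n(\nu)$ is the simple head of $\pi_\omega\Delta_n(\nu)$. This is not automatic: for an idempotent $e$ in a cellular (or quasi-hereditary) algebra it can happen that $e\Delta(\nu)\neq 0$ while $eL(\nu)=0$, in which case the head of $e\Delta(\nu)$ is some $eL(\nu')$ with $\nu'\neq\nu$. The statement you want does follow from Donkin's truncation results \cite[Appendix]{don2}, but only once one knows that $\Lambda_\omega$ is co-saturated for a quasi-hereditary order on $B^m_n$. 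With respect to the dominance order of Theorem~\ref{cellstrattypec} the set $\Lambda_\omega$ is \emph{not} co-saturated (e.g.\ $m=2$, $\omega=(4,0)$: $((2),\emptyset)\in\Lambda_\omega$ and $((3),(1))\rhd((2),\emptyset)$, but $((3),(1))\notin\Lambda_\omega$). Co-saturation with respect to the refined order $\leq$ is exactly what Proposition~\ref{parity} (or Corollary~\ref{neccondition}) provides, so invoking it here is circular. Note also that $\pi_\omega^\ast=\pi_{\omega'}$ with $\omega'_r=\omega_{m-r}$, so $\pi_\omega$ is not $\ast$-fixed in general, which further obstructs a direct cellular argument.

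There is an easy repair, and once made your route is genuinely different from the paper's and arguably cleaner. Using the localisation functor $F_n$ (equation~\eqref{localisation} and its analogue for simples), reduce to the case where $\lambda$ is an $m$-partition of $n$; then $L_n(\lambda)=\Delta_n(\lambda)=\mathbf{S}(\lambda)$ and $\pi_{|\lambda|}L_n(\lambda)\neq 0$ by~\eqref{omegaspecht}. Exactness of $\pi_{|\lambda|}(-)$ now forces $\pi_{|\lambda|}\Delta_n(\mu)\neq 0$, and your dangle criterion gives $|\mu|\preceq|\lambda|$ in one stroke --- no need to vary $\omega$ coordinate by coordinate. The paper instead argues by induction on $n$ via the signed induction/restriction functors and adjointness (Proposition~\ref{indres}); your corrected argument avoids that machinery entirely, at the cost of importing Proposition~\ref{omegastandard} from later in the paper (whose proof, however, does not depend on Proposition~\ref{parity}).
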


\begin{proof} We prove this by induction on $n$. If $n=0$ then
  $B_0^m=k$, therefore there is only one simple module
  $\Delta_0(\emptyset)=L_0(\emptyset)$ and there is nothing to prove. 

Let $n\geq 1$ and suppose that $[\Delta_n(\mu):L_n(\lambda)]\neq 0$,
that is we have a non-zero homomorphism $\Delta_n(\lambda)\rightarrow
\Delta_n(\mu)/N$ for some submodule $N$ of $\Delta_n(\mu)$.  By
localisation, we may assume that $\lambda$ is an $m$-partition of $n$,
so that $L(\lambda)=\Delta(\lambda)$, and that $\mu$ is an
$m$-partition of $n-2l$ for some $l\leq \lfloor n/2\rfloor$.

As $n\geq 1$, $\lambda$ has at least one removable box, $\epsilon$,
say in the $r$th part of the $m$-partition $\lambda$.  Then by
Proposition \ref{indres} (and noting that, under our assumption, each
term is a direct sum) we have that
$$\xi^r\text{-}\ind_{n-1}\Delta_{n-1}(\lambda-\epsilon) \twoheadrightarrow
\Delta_n(\lambda).$$ Therefore we have
$$\xi^r\text{-}\ind_{n-1}\Delta_{n-1}(\lambda-\epsilon) \rightarrow
\Delta_n(\mu)/N.$$ By adjointness of $\xi^r\text{-}\ind_{n-1}$ and
$\xi^r\text{-}\res_n$ we have
\begin{align*}
\Hom_{B^m_n}(\xi^r\text{-}\ind_{n-1}\Delta_{n-1}(\lambda-\epsilon),
\Delta_n(\mu)/N) 
\cong
\Hom_{B^m_{n-1}}(\Delta_{n-1}(\lambda-\epsilon),
\xi^r\text{-}\res_{n}\Delta_n(\mu)/N)  
\end{align*} 
By Proposition \ref{indres} we can conclude that either: 
\begin{align*}
[\Delta_{n-1}(\mu-\epsilon'):L_{n-1}(\lambda-\epsilon)]\neq 0
\end{align*} and $\epsilon' \in \xi^{r}\text{-}\remb(\mu)$ or
\begin{align*}
[\Delta_{n-1}(\mu+\epsilon''):L_{n-1}(\lambda-\epsilon)]\neq 0.
\end{align*} 
and $\epsilon'' \in \xi^{m-r}\text{-}\add(\mu)$.

In the first case we have by our inductive assumption
that $$|\mu-\epsilon'| \preceq_{(\sum_ia_i\xi^i)} |\lambda-\epsilon|$$
for some $a_i \geq0$.  We have that $\epsilon' \in
\xi^{r}\text{-}\remb(\mu)$ and $\epsilon \in
\xi^{r}\text{-}\remb(\mu)$ and so $$|\mu|\preceq_{(\sum_ia_i\xi^i)}
|\lambda|$$ as required. In the second case we have by our inductive
assumption that
$$|\mu + \epsilon''| \preceq_{(\sum_ia_i\xi^i)} |\lambda - \epsilon|$$
for some $a_i \geq0$.  We have that $\epsilon'' \in
\xi^{m-r}\text{-}\add(\mu)$ and $\epsilon \in
\xi^{r}\text{-}\remb(\mu)$ and so
$$|\mu | \preceq_{(\sum_ib_i\xi^i)} |\lambda |$$
where $a_i=b_i$ for all $i \neq r$, and $a_r=b_r-1$.
 \end{proof}

\begin{defn}
We define a partial order, $\leq$, on $\Lambda(m,n)$ by taking
$\lambda \leq \mu$ if $\lambda^i \subseteq \mu^i$ for all $0\leq i\leq m-1$
and $|\lambda|\preceq |\mu|$.
\end{defn}

\subsection{}\textbf{The restriction of standard modules to $H^m_n$}

Here we calculate the multiplicities
$$[\Delta_n(\lambda)\!  \downarrow_{H_n^m}:\mathbf{S}(\mu)]$$ 
for $\lambda\vdash n-2l$ and $\mu\vdash n$.  The case $l=1$ was already
done in \cite[Theorem 2.9]{ruixu}, where they remark (see \cite[Remark
  2.6]{ruixu}) that the general case given in \cite[Section
  4.4]{ruiyu} is incorrect.
 
\medskip

Recall that we have $\Delta_n(\lambda)=V(m,n,l)\otimes
\mathbf{S}(\lambda)$. From the explicit action of $B_n^m$ given in
Section 3.1, it is easy to see that we 
have
\begin{eqnarray*}
\Delta_n(\lambda)\! \downarrow_{H_n^m} &=& (V(m,n,l)\otimes
\mathbf{S}(\lambda))\! \downarrow _{H_n^m}\\ 
&\cong& H_n^m\otimes_{H_{2l}^m\otimes H_{n-2l}^m}(V(m,2l,l)\otimes
\mathbf{S}(\lambda)). 
\end{eqnarray*}
So the first step is to understand the structure of
$V(m,2l,l)\!\downarrow_{H_{2l}^m}$.

Each $(m,2l,l)$-dangle has $l$ arcs denoted by $(i_p,j_p)$ (for
$p=1,\ldots , l$) where $i_p$ (resp. $j_p$) is the left (resp. right)
vertex of the arc. Note that for any arc $(i_p,j_p)$ in $v$ and any
$r\in \mathbb{Z}/m\mathbb{Z}$ we have
\begin{equation}\label{pairing}
T_{i_p}^r v = T_{j_p}^{m-r} v.
\end{equation}
It follows that as a
$(\mathbb{Z}/m\mathbb{Z})^{2l}$-module, $V(m,2l,l)$ decomposes as
$$V(m,2l,l)\!\downarrow_{(\mathbb{Z}/m\mathbb{Z})^{2l}} =
\bigoplus_{\begin{subarray}{c}\tiny{v}\\ \tiny{(r_1,\ldots ,
      r_l)}\end{subarray}} k(T_{i_1}^{r_1}T_{i_2}^{r_2} \ldots
T_{i_l}^{r_l} v)$$ where the sum is over all $(m,2l,l)$-dangles $v$
with all arcs labelled by $0$ and over all $l$-tuples $(r_1,r_2,
\ldots, r_l)\in (\mathbb{Z}/m\mathbb{Z})^l$.  The generators
of $\Sigma_{2l}$ acts as follows: For each $p\neq q\in \{1,2,\ldots
l\}$ we have
\begin{eqnarray}
t_{i_p,j_p}T_{i_1}^{r_1}T_{i_2}^{r_2} \ldots T_{i_p}^{r_p} \ldots
T_{i_l}^{r_l} v &=& T_{i_1}^{r_1}T_{i_2}^{r_2} \ldots T_{i_p}^{m-r_p}
T_{i_l}^{r_l} v,\label{actionsym1}\\  
t_{i_p,i_q}T_{i_1}^{r_1}T_{i_2}^{r_2} \ldots T_{i_p}^{r_p} \ldots
T_{i_q}^{r_q} \ldots T_{i_l}^{r_l} v &=& T_{i_1}^{r_1}T_{i_2}^{r_2}
\ldots T_{i_p}^{r_q} \ldots T_{i_q}^{r_p} \ldots T_{i_l}^{r_l}
(t_{i_p,i_q}v).\label{actionsym2} 
\end{eqnarray}
For $(r_1,r_2, \ldots, r_l)\in (\mathbb{Z}/m\mathbb{Z})^l$
define the weight $wt(r_1,r_2,\ldots, r_l)$ to be 
$\varphi=(\varphi_0, \varphi_1, \ldots, \varphi_{\lfloor m/2\rfloor})$
where $$\varphi_i=|\{r_p\, : \, r_p=i \, \mbox{or}\, m-i\}|.$$ It
follows from (\ref{actionsym1}) and (\ref{actionsym2}) that
\begin{equation}\label{phidecomposition}
V(m,2l,l)\!\downarrow_{H_{2l}^m} = \oplus_\varphi V(m,2l,l)^\varphi
\end{equation}
where
$$V(m,2l,l)^\varphi =
\bigoplus_{\begin{subarray}{c}\tiny{v}\\ \tiny{wt(r_1,\ldots,
      r_l)=\varphi} \end{subarray}} k(T_{i_1}^{r_1}T_{i_2}^{r_2}
\ldots T_{i_p}^{r_p} \ldots T_{i_l}^{r_l} v)$$ and the sum is over all
$(m,2l,l)$-dangles $v$ with all arcs labelled by $0$.  Now, it follows
again from (\ref{actionsym1}) and (\ref{actionsym2}) that
$V(m,2l,l)^\varphi$ is a cyclic $H_{2l}^m$-module. We now construct an
explicit generator for this module. Let $v_\varphi$ be the
$(m,2l,l)$-dangle with all arcs labelled by $0$ and with set of arcs
given by
$$\cup_{0 \leq i \leq \lfloor{m/2} \rfloor}{\rm Arc}(i)$$
where we have
$${\rm Arc}(0)=\{(1,2),(3,4), \ldots , (2\varphi_0-1, 2\varphi_0)\}$$
and for $0<i<m/2$
\begin{eqnarray*}
{\rm Arc}(i)&=&\{(2(\varphi_0+\ldots +\varphi_{i-1})+1,2(\varphi_0
+\ldots \varphi_i)),\\
&&\quad (2(\varphi_0+\ldots +\varphi_{i-1})+2,2(\varphi_0
+\ldots \varphi_i)-1), \ldots \\ 
&&\quad\quad (2(\varphi_0+\ldots +\varphi_{i-1})+\phi_i,2(\varphi_0 +\ldots
\varphi_{i-1})+\varphi_i+1)\}, 
\end{eqnarray*}
and if $i=m/2$ we have
$$\{(2(\varphi_0+\ldots + \varphi_{m/2-1})+1, 2(\varphi_0+\ldots +
\varphi_{m/2-1})+2), \ldots , (2l-1,2l)\}.$$ 
The $(m,2l,l)$-dangle $v_{\varphi}$ is depicted in Figure \ref{figurevphi}.

\begin{figure}[ht]

\begin{center}
  \begin{minipage}{54mm}
\def\objectstyle{\scriptstyle}
\xymatrix@=2pt{
			&&		&&	&	&&	\ar@{..}[dd]	
			&&&&\ar@{..}[dd]	&&&&	\ar@{..}[dd]	&&	\ar@{..}[dd]	&&&&\ar@{..}[dd]&&&&\ar@{..}[dd]			\\
\circ 	\ar@{-}[rr]	& &\circ 		&\cdots& \circ 	\ar@{-}[rr]			&&\circ		&&\circ \ar@{-}@/_.7pc/[rrrrrr]		&\cdots&\circ \ar@{-}@/_.4pc/[rr]		&&\circ 		&\cdots&\circ		&&\cdots&& 
		 	 \circ \ar@{-}@/_.7pc/[rrrrrr]		&\cdots&\circ \ar@{-}@/_.4pc/[rr]		&&\circ 		&\cdots&\circ	
&&		\circ 	\ar@{-}[rr]	& &\circ 		&\cdots& \circ 	\ar@{-}[rr]			&&\circ			\\
&& &&&&&&&&&&&&&&&&&&&&&&&&&&&&	
} \end{minipage} 
 \end{center}
\caption{The $(m,2l,l)$-dangle $v_{\varphi}$.}
\label{figurevphi}
\end{figure}
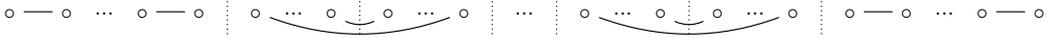

Now we define 
$$T^\varphi = \prod_{0\leq i\leq \lfloor m/2 \rfloor}
\prod_{(i_p,j_p)\in {\rm Arc}(i)}T_{i_p}^i.$$ It follows from
(\ref{actionsym1}) and (\ref{actionsym2}) that $T^\varphi v_\varphi$
is a generator for the $H_{2l}^m$-module $V(m,2l,l)$.  The stabiliser
of $k(T^\varphi v_\varphi)$ is given by
$${\rm Stab}(T^\varphi v_\varphi)=k((\mathbb{Z}/m\mathbb{Z})\wr
(\Sigma_2 \wr \Sigma_{\varphi_0}))\otimes \left(\bigotimes_{0<i<m/2}
k((\ZZ/m\ZZ)\wr \Sigma_{\varphi_i}) \right)\otimes k((\ZZ/m\ZZ)\wr
(\Sigma_2\wr \Sigma_{\varphi_{m/2}}))$$ where we ignore the last term
if $m$ is odd, and the group $\Sigma_{\varphi_i}$ is viewed as a
subgroup of $\Sigma_{\varphi_i}\times \Sigma_{\varphi_i}$ via the
diagonal embedding. As a module for its stabiliser, we have
$$k(T^\varphi v_\varphi)=(k_{\Sigma_2\wr
  \Sigma_{\varphi_0}})^{(0)}\otimes \left(\bigotimes_{0<i<m/2}((k\otimes
k)_{\Sigma_{\varphi_i}})^{(i)\otimes (m-i)}\right) \otimes (k_{\Sigma_2\wr
  \Sigma_{\varphi_{m/2}}})^{(m/2)}.$$ Thus we have
\begin{equation}\label{vphi}
V(m,2l,l)^\varphi \cong [(k_{\Sigma_2\wr
    \Sigma_{\varphi_0}})^{(0)}\otimes \left(\bigotimes_{0<i<m/2}((k\otimes
  k)_{\Sigma_{\varphi_i}})^{(i)\otimes (m-i)}\right) \otimes (k_{\Sigma_2\wr
    \Sigma_{\varphi_{m/2}}})^{(m/2)})]\! \uparrow_{{\rm
    Stab}(T^\varphi v_\varphi)}^{H_{2l}^m}. 
\end{equation}

\medskip

We can now prove the main result of this section.

\begin{thm}\label{lr}
Let $\lambda, \mu \in \Lambda(m,n)$.  If $\lambda \not \leq \mu$, then
$[\Delta_n(\lambda)\! \downarrow_{H_n^m}:\mathbf{S}(\mu)] =0$.
Otherwise, we have that 
$|\lambda| \preceq_{\sum a_r\xi^r} |\mu|$, and
\begin{align*}
[\Delta_n(\lambda)\!\downarrow_{H_n^m}:\mathbf{S}(\mu)] = \prod_{i \neq 0,
  m/2}\left(\sum_{\tau \vdash
  a_i}c^{\mu^i}_{\lambda^i,\tau}c^{\mu^{m-i}}_{\lambda^{m-i},\tau}\right)
\prod_{j = 0, m/2} \left(\sum _{\begin{subarray}{c}\eta\vdash 2a_j
    \\ \eta \,\,\mbox{\tiny{even}}\end{subarray}}
c^{\mu^j}_{\lambda^j, \eta}\right)
\end{align*}
\end{thm}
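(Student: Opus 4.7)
The plan is to start from the isomorphism
\[
\Delta_n(\lambda)\!\downarrow_{H_n^m} \;\cong\; H_n^m \otimes_{H_{2l}^m \otimes H_{n-2l}^m}\bigl(V(m,2l,l)\otimes \mathbf{S}(\lambda)\bigr)
\]
established just before the theorem, and reduce to decomposing the $H_{2l}^m$-module $V(m,2l,l)$ using the weight decomposition \eqref{phidecomposition} and the induced-module description \eqref{vphi}. Transitivity of induction together with the Littlewood--Richardson rule for cyclotomic Hecke algebras (\cite{matspecht}) will then convert each weight piece into a sum of $\mathbf{S}(\mu)$'s.

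First I would determine which weights $\varphi$ can contribute to $[\Delta_n(\lambda)\!\downarrow_{H_n^m}:\mathbf{S}(\mu)]$. The colour-count of $\mathbf{S}(\mu)$ must match that of $\mathbf{S}(\lambda)\otimes V(m,2l,l)^\varphi$, and this forces $\varphi_i = |\mu^i|-|\lambda^i| = |\mu^{m-i}|-|\lambda^{m-i}|$ for $0<i<m/2$ and $\varphi_j = \tfrac12(|\mu^j|-|\lambda^j|)$ for $j\in\{0,m/2\}$. If $|\lambda|\not\preceq|\mu|$ no such $\varphi$ exists and the multiplicity vanishes; otherwise $\varphi_i=a_i$ is uniquely determined.

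Next, for this unique $\varphi$, I would use \eqref{vphi} and the Morita equivalence \eqref{omegaspecht} to pass the computation to the symmetric group side, where it factors colour by colour. In each paired slot $(i,m-i)$ with $0<i<m/2$, the trivial module of the diagonal $\Sigma_{\varphi_i}\hookrightarrow\Sigma_{\varphi_i}\times\Sigma_{\varphi_i}$ decomposes as $\bigoplus_{\tau\vdash\varphi_i}S(\tau)\boxtimes S(\tau)$; coupling with $S(\lambda^i)\boxtimes S(\lambda^{m-i})$, inducing to $\Sigma_{|\mu^i|}\times\Sigma_{|\mu^{m-i}|}$, and extracting the multiplicity of $S(\mu^i)\boxtimes S(\mu^{m-i})$ yields the factor $\sum_{\tau}c^{\mu^i}_{\lambda^i,\tau}c^{\mu^{m-i}}_{\lambda^{m-i},\tau}$ via two applications of Littlewood--Richardson. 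In each unpaired slot $j\in\{0,m/2\}$, the trivial module of $\Sigma_2\wr\Sigma_{\varphi_j}$ induced to $\Sigma_{2\varphi_j}$ is the classical $\bigoplus_{\eta\vdash 2\varphi_j,\ \eta\text{ even}}S(\eta)$ (Frobenius characteristic $h_{\varphi_j}[h_2]$), and Littlewood--Richardson then produces $\sum_{\eta\vdash 2a_j,\ \eta\text{ even}}c^{\mu^j}_{\lambda^j,\eta}$. Multiplying over all slots gives the stated formula.

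The main technical obstacle I anticipate is the bookkeeping in the weight argument: verifying that the labels $(i)\otimes(m-i)$ appearing in \eqref{vphi} genuinely couple $\lambda^i$ with $\lambda^{m-i}$ through a common partition $\tau$ (rather than pairing $\lambda^i$ with itself), and confirming that the single colour constraint $\varphi_i=a_i$ pins down $\varphi$ uniquely. The vanishing statement for $\lambda\not\le\mu$ will then fall out automatically, since either $|\lambda|\not\preceq|\mu|$ (weight obstruction) or some $\lambda^i\not\subseteq\mu^i$, which forces the corresponding Littlewood--Richardson coefficient to vanish.
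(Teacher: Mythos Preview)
Your proposal is correct and follows the same overall strategy as the paper: use the weight decomposition \eqref{phidecomposition} and the induced-module description \eqref{vphi} of $V(m,2l,l)$, rearrange by colour via transitivity of induction, and then compute each colour factor separately.

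The only point of divergence is the very last step. The paper, after rearranging, \emph{recognises} the colour-$j$ factor (for $j=0,m/2$) as the restriction to $\Sigma_{2\varphi_j+|\lambda^j|}$ of a classical Brauer standard module, and the colour-$(i,m-i)$ factor as the restriction of a walled Brauer standard module, and then cites \cite{dhw} and \cite{halvwall} for the resulting multiplicities. You instead compute these factors directly from first principles, using the standard identities
\[
k\!\uparrow_{\Delta\Sigma_a}^{\Sigma_a\times\Sigma_a}\;\cong\;\bigoplus_{\tau\vdash a} S(\tau)\boxtimes S(\tau),
\qquad
k\!\uparrow_{\Sigma_2\wr\Sigma_a}^{\Sigma_{2a}}\;\cong\;\bigoplus_{\substack{\eta\vdash 2a\\ \eta\ \text{even}}} S(\eta),
\]
followed by Littlewood--Richardson. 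Your route is slightly more self-contained (no appeal to the Brauer/walled Brauer literature), while the paper's route has the advantage of making the structural link to those algebras explicit, which is exactly what is exploited later in Section~5. Either way the bookkeeping you flag---that the $(i)\otimes(m-i)$ labels in \eqref{vphi} force the common $\tau$ to couple $\lambda^i$ with $\lambda^{m-i}$, and that the colour constraints pin down $\varphi$ uniquely---is handled in the paper by the ``rearranging according to the action of $(\ZZ/m\ZZ)^n$'' step, and your description of it is accurate.
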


\begin{proof}
Using the decomposition of $V(m,2l,l)$ given in
(\ref{phidecomposition}) and (\ref{vphi}) and the construction of the
Specht modules given in Section 2.2 we have
\begin{eqnarray*}
\Delta_n(\lambda)\!\downarrow_{H_n^m}&\cong& (V(m,2l,l)\otimes
\mathbf{S}(\lambda))\!\uparrow_{H_{2l}^m\otimes H_n^m}^{H_n^m}\\ 
&\cong& \oplus_\varphi (V(m,2l,l)^\varphi\otimes
\mathbf{S}(\lambda))\!\uparrow_{H_{2l}^m\otimes H_n^m}^{H_n^m}\\ 
&\cong& \oplus_\varphi [ (k_{\Sigma_2\wr
    \Sigma_{\varphi_0}})^{(0)}\otimes \left(\bigotimes_{0<i<m/2}((k\otimes
  k)_{\Sigma_{\varphi_i}})^{(i)\otimes (m-i)}\right) \otimes (k_{\Sigma_2\wr
    \Sigma_{\varphi_{m/2}}})^{(m/2)}\\ 
&& \otimes \bigotimes_{1\leq i\leq m}S(\lambda^i)^{(i)}] \!
\uparrow_{{\rm Stab}(T^\varphi v_\varphi)\otimes
  H_{|\lambda|}^m}^{H_{n}^m}. 
\end{eqnarray*}
Rearranging according to the action of $(\ZZ/m\ZZ)^{n}$ and using
transitivity of induction we get
\begin{eqnarray*}
\Delta_n(\lambda)\!\downarrow_{H_n^m} &\cong& 
\oplus_\varphi [((k\!\uparrow_{\Sigma_2\wr
    \Sigma_{\varphi_0}}^{\Sigma_{2\varphi_0}}\otimes
  S(\lambda^m))\!\uparrow_{\Sigma_{2\varphi_0}\times
    \Sigma_{|\lambda^m|}}^{\Sigma_{2\varphi_0+|\lambda^m|}})^{(0)} \\ 
&& \otimes \left(\bigotimes_{0<i<m/2}(((k\otimes k)\!
  \uparrow_{\Sigma_{\varphi_i}}^{\Sigma_{\varphi_i}\times
    \Sigma_{\varphi_i}} \otimes S(\lambda^i)\otimes
  S(\lambda^{m-i})\!\uparrow_{\Sigma_{\varphi_i}\times
    \Sigma_{\varphi_i}\times \Sigma_{|\lambda^i|}\times
    \Sigma_{|\lambda^{m-i}|}}^{\Sigma_{\varphi_i+|\lambda^i|}\times
    \Sigma_{\varphi_i + |\lambda^{m-i}|}})^{(i)\otimes (m-i)}\right)\\ 
&& \otimes
  ((k\!\uparrow_{\Sigma_2\wr\Sigma_{\varphi_{m/2}}}^{\Sigma_{2\varphi_{m/2}}}\otimes
  S(\lambda^{m/2}))\!\uparrow_{\Sigma_{2\varphi_{m/2}}\times
    \Sigma_{|\lambda^{m/2}|}}^{\Sigma_{2\varphi_{m/2}+|\lambda^{m/2}|}})^{(m/2)}] 
\!\uparrow_{H_{\varphi+|\lambda|}^m}^{H_n^m},
\end{eqnarray*} 
where 
$$H_{\varphi + |\lambda|}^m=H_{2\varphi_0+|\lambda^m|}^m\otimes
\left(\bigotimes_{0<i<m/2} (H_{\varphi_i +|\lambda^i|}^m \otimes
H_{\varphi_i+|\lambda^{m-i}|}^m)\right) \otimes
H_{2\varphi_{m/2}+|\lambda^{m/2}|}^m.$$ 
Note that 
$$(k\!\uparrow_{\Sigma_2\wr
  \Sigma_{\varphi_0}}^{\Sigma_{2\varphi_0}}\otimes
S(\lambda^m))\!\uparrow_{\Sigma_{2\varphi_0}\times
  \Sigma_{|\lambda^m|}}^{\Sigma_{2\varphi_0+|\lambda^m|}}$$ is exactly
the restriction to $\Sigma_{2\varphi_0+|\lambda^m|}$ of the standard
module labelled by $\lambda^m$ for the classical Brauer algebra
$B(2\varphi_0+|\lambda^m|, \delta')$ (any parameter $\delta'$). And
similarly for the last term.  Note also that
$$((k\otimes k)\!
\uparrow_{\Sigma_{\varphi_i}}^{\Sigma_{\varphi_i}\times
  \Sigma_{\varphi_i}} \otimes S(\lambda^i)\otimes
S(\lambda^{m-i}))\!\uparrow_{\Sigma_{\varphi_i}\times
  \Sigma_{\varphi_i}\times \Sigma_{|\lambda^i|}\times
  \Sigma_{|\lambda^{m-i}|}}^{\Sigma_{\varphi_i+|\lambda^i|}\times
  \Sigma_{\varphi_i + |\lambda^{m-i}|}}$$ is exactly the restriction
to $\Sigma_{\varphi_i + |\lambda^i|}\times
\Sigma_{\varphi_i+|\lambda^{m-i}|}$ of the standard module labelled by
$(\lambda^i, \lambda^{m-i})$ for the walled Brauer algebra
$WB(\varphi_i + |\lambda^i|,\varphi_i+|\lambda^{m-i}|, \delta')$ (any
parameter $\delta'$). The result now follows from \cite{dhw} and
\cite{halvwall} (by replacing $\varphi_i$ by $a_i$ in the statement).
\end{proof}

\begin{cor}\label{neccondition}
Let $\lambda, \mu\in \Lambda(m,n)$. If $[\Delta_n(\lambda):
  L_n(\mu)]\neq 0$ then $\lambda \leq \mu$.
\end{cor}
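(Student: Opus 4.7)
The plan is to iterate the localisation functor to reduce to the case where $\mu$ is a partition of the maximal rank (so that $L(\mu)$ becomes a Specht module), and then deduce the result from Theorem~\ref{lr} via restriction to the Hecke subalgebra.

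Write $\mu \vdash n - 2l$ for some $l \geq 0$. If $l \geq 1$ then $\mu \in \Lambda(m, n-2)$, so $F_n(L_n(\mu)) = L_{n-2}(\mu) \neq 0$; combined with exactness of $F_n$ and the hypothesis $[\Delta_n(\lambda) : L_n(\mu)] \neq 0$, this forces $F_n(\Delta_n(\lambda)) = \Delta_{n-2}(\lambda) \neq 0$ by (\ref{localisation}), hence $\lambda \in \Lambda(m, n-2)$ and $[\Delta_{n-2}(\lambda) : L_{n-2}(\mu)] \neq 0$. Iterating $l$ times yields $\lambda \in \Lambda(m, n-2l)$ together with $[\Delta_{n-2l}(\lambda) : L_{n-2l}(\mu)] \neq 0$, so we are reduced to the case $\mu \vdash n - 2l$ inside the algebra $B^m_{n-2l}$.

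At this stage $\Delta_{n-2l}(\mu) = V(m, n-2l, 0) \otimes \mathbf{S}(\mu) = \mathbf{S}(\mu)$, viewed as a $B^m_{n-2l}$-module through the surjection (\ref{AeA}) $B^m_{n-2l} \twoheadrightarrow H^m_{n-2l}$. Any $B^m_{n-2l}$-submodule of $\mathbf{S}(\mu)$ is in particular an $H^m_{n-2l}$-submodule via the subalgebra inclusion, and $\mathbf{S}(\mu)$ is simple as an $H^m_{n-2l}$-module in characteristic zero, so $\mathbf{S}(\mu) \cong L_{n-2l}(\mu)$. Consequently $\mathbf{S}(\mu)$ appears as an $H^m_{n-2l}$-composition factor of $\Delta_{n-2l}(\lambda) \!\downarrow_{H^m_{n-2l}}$, obtained by restricting a $B^m_{n-2l}$-composition series of $\Delta_{n-2l}(\lambda)$ and identifying the contribution of the factor $L_{n-2l}(\mu)$. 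Theorem~\ref{lr} applied to $B^m_{n-2l}$ then delivers $\lambda \leq \mu$, as required.

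The one non-routine step is the identification $L_{n-2l}(\mu) \cong \mathbf{S}(\mu)$ for $\mu \vdash n-2l$, which is forced by the dual role of $H^m_{n-2l}$ inside $B^m_{n-2l}$ as both a subalgebra and a quotient. Everything else is a direct appeal to the exactness of the localisation functor and to the restriction formula of Theorem~\ref{lr}, and one may check that the resulting inequality is consistent with, and strictly refines, the parity condition of Proposition~\ref{parity}.
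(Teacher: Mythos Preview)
Your argument is correct and is precisely the standard route the paper has in mind: the corollary is stated immediately after Theorem~\ref{lr} with no proof, and the intended deduction is exactly the one you give --- localise until $\mu$ sits at the top layer so that $L(\mu)=\Delta(\mu)=\mathbf{S}(\mu)$, then read off $\lambda\leq\mu$ from Theorem~\ref{lr} via restriction to $H^m_{n-2l}$. The only thing worth tightening is the sentence ``this forces $F_n(\Delta_n(\lambda))\neq 0$'': what you are really using is that exactness of $F_n$ together with $F_n L_n(\mu)=L_{n-2}(\mu)$ gives $[F_n\Delta_n(\lambda):L_{n-2}(\mu)]=[\Delta_n(\lambda):L_n(\mu)]\neq 0$, from which both $\lambda\in\Lambda(m,n-2)$ and the preserved decomposition number follow at once.
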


\section{Truncation to idempotent subalgebras}
 
In this section we show that maximal co-saturated idempotent
subalgebras of $B^m_n$ are isomorphic to a tensor product of classical
and walled Brauer algebras. Hence we determine the space of
homomorphisms between standard modules and the decomposition numbers
for $B_n^m$.

\subsection{}\textbf{Co-saturated sets}
 
For $\omega\in \Lambda |m,n|$ an $m$-composition of $n$, we define
$(\preceq \omega)\subseteq \Lambda |m,n|$ to be the subset of all
$m$-compositions less than or equal to $\omega$ with respect to
$\preceq$. We define $\Lambda_\omega$ to be the pre-image of $(\preceq
\omega)$ in $\Lambda (m,n)$, that is the set of all $m$-partitions
$\lambda$ with $|\lambda|\preceq \omega$.

\begin{eg}
The diagram in Figure \ref{poset2} is the Hasse diagram of the poset
$(\preceq (1,1,4)) \subset \Lambda|3,6|$.  We have again annotated the
edges with the signed partial ordering.
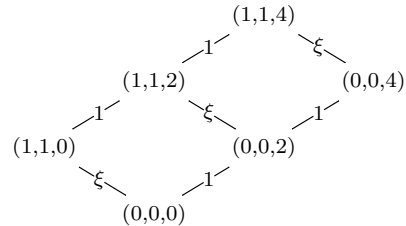
\begin{figure}[ht]
\centerline{
 \begin{minipage}{54mm}
\def\objectstyle{\scriptstyle}
\xymatrix@=2pt{
 		&&		&& 		&&
  &&(1,1,4)	\ar@{-}[ddrr]|\xi  \ar@{-}[ddll]|1 	&&
  &&	\\&\\ 
			&& &&				&&(1,1,2)
  \ar@{-}[ddrr]|\xi  \ar@{-}[ddll]|1
  &&&&(0,0,4)\ar@{-}[ddll]|1  \\&\\ 
			&&&&(1,1,0) && &&(0,0,2)  \\&\\
			&&&&&&(0,0,0)\ar@{-}[uurr]|1  \ar@{-}[uull]|\xi
}\end{minipage}}
\caption{A sub-poset of the Hasse poset in Figure \ref{poset1}.}
\label{poset2}
\end{figure}

\end{eg}

We have chosen to work with the partial order $\leq$ on $\Lambda(m,n)$
as it is a refinement of the natural partial ordering given by
inclusion on the set of multipartitions. Note, however, that
$B_n^m(\delta)$ is quasi-hereditary with respect to the opposite
partial order $\leq_{{\rm opp}}$ on $\Lambda(m,n)$, and we have that
$\Lambda_\omega\subseteq(\Lambda(m,n),\leq_{{\rm opp}})$ is a
co-saturated subset. So we can apply the results from
\cite[Appendix]{don2} on idempotent subalgebras corresponding to
co-saturated subsets for quasi-hereditary algebras.  The first thing
we need is an idempotent corresponding to $\Lambda_\omega$.

\subsection{}\textbf{Idempotents and standard modules}
 
In this section we consider the effect of applying the idempotents
$\pi_{{\omega}}$ defined in Section 2.2 to standard modules. 

Recall that an $(m,n,l)$-dangle $v$ can be described as a set of $l$
disjoint pairs $(i_p<j_p)\in \{1,\ldots , n\}^2$, called arcs, where
each arc is labelled by an element of $\ZZ/m\ZZ$. We say that $v$
belongs to $\omega$ if every arc $(i_p, j_p)$ (for $p=1,\ldots ,l$)
satisfies $i_p\in [\omega_{r_p}]$ and $j_p\in [\omega_{m-r_p}]$ for
some $0\leq r_p\leq \lfloor m/2 \rfloor$. In this case we define
$\omega \setminus v$ by
$$\omega \setminus v = \omega - \sum_{p=1}^l
(\epsilon_{r_p}+\epsilon_{m-r_p})\in \Lambda|m,n|$$ where
$\epsilon_{r_p}=(0,\ldots , 0 ,1, 0 ,\ldots , 0)$ with a $1$ in
position $r_p$ (and similarly for $\epsilon_{m-r_p}$).

\begin{prop}\label{omegastandard}
Let $v$ be an $(m,n,l)$-tangle, $\lambda\in \Lambda(m,n)$ and $x\in
\mathbf{S(\lambda)}$. Then we have
$$ \pi_\omega (v\otimes x) = \left\{ \begin{array}{ll}
  T_{i_1}^{r_1}t_{i_2}^{r_2}\ldots T_{i_l}^{r_l}v\otimes \pi_{\omega
    \setminus v}x & \mbox{if $v$ belongs to $\omega$}\\ 0 &
  \mbox{otherwise.}\end{array}\right.$$ 
In particular, we have that
$\pi_\omega \Delta_n(\lambda)\neq 0$ if and only if $\lambda \in
\Lambda_\omega$.
\end{prop}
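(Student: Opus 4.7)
The plan is to expand $\pi_\omega = \prod_{r=0}^{m-1}\prod_{i\in[\omega_r]}T_i^r$ as a commuting product and apply its factors to $v\otimes x$ strand by strand, exploiting the orthogonality $T_i^rT_i^s=\delta_{r,s}T_i^r$ on each strand and the pairing relation (\ref{pairing}), $T_{i_p}^r v = T_{j_p}^{m-r}v$, on each arc of $v$. The factors of $\pi_\omega$ coming from free nodes of $v$ and those coming from arc endpoints play very different roles, so I treat them separately.

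For each free node $i\in[\omega_{s_i}]$ of $v$, the description of the $B^m_n$-action on $\Delta_n(\lambda)=V(m,n,l)\otimes\mathbf{S}(\lambda)$ in Section 3.1 shows that $t_i^q$ concatenates above $v$ to leave the dangle unchanged and deposit a factor $t_k^q$ acting on $\mathbf{S}(\lambda)$, where $k$ is the index of the corresponding through strand of $v$. Hence $T_i^{s_i}$ acts as $T_k^{s_i}$ on the Specht factor, and collecting these contributions over all free nodes yields exactly the idempotent $\pi_{\omega\setminus v}$ acting on $x$. For each arc $(i_p,j_p)$ of $v$ with $i_p\in[\omega_{a_p}]$ and $j_p\in[\omega_{b_p}]$, the two factors $T_{i_p}^{a_p}$ and $T_{j_p}^{b_p}$ combine via (\ref{pairing}) to $T_{j_p}^{b_p}T_{j_p}^{m-a_p}v$, which by orthogonality vanishes unless $b_p=m-a_p$. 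The simultaneous non-vanishing condition on all arcs is precisely the definition of $v$ belonging to $\omega$ (with $r_p=a_p$), and when it holds each pair collapses to a single $T_{i_p}^{r_p}$ on $v$. Combining the two cases yields the displayed formula.

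The ``in particular'' statement then follows from (\ref{omegaspecht}), which gives $\pi_{\omega\setminus v}\mathbf{S}(\lambda)\neq 0$ iff $|\lambda|=\omega\setminus v$. It remains to see that a dangle $v$ belonging to $\omega$ with $\omega\setminus v=|\lambda|$ exists if and only if $|\lambda|\preceq\omega$. One direction is immediate, since each arc contributes $\epsilon_{r_p}+\epsilon_{m-r_p}$ to $\omega-(\omega\setminus v)$, which trivially satisfies the three defining conditions of $\preceq$. For the converse, given $|\lambda|\preceq\omega$, set $a_r=\omega_r-|\lambda|_r$; conditions (ii)--(iii) ensure that we can place $a_r$ arcs between $[\omega_r]$ and $[\omega_{m-r}]$ for $0<r<m/2$ and $a_r/2$ arcs inside $[\omega_r]$ for $r\in\{0,m/2\}$, while condition (i) guarantees enough room on the relevant blocks of nodes. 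I do not anticipate a genuine obstacle: the only subtlety is correctly tracking how the action of $\pi_\omega$ on the top of $v\otimes x$ passes through the concatenation into the Specht-module factor, and once that is cleaned up the pairing relation does all the algebraic work.
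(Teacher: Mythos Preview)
Your proposal is correct and follows essentially the same route as the paper: both arguments use the pairing relation (\ref{pairing}) together with the orthogonality of the $T_i^r$ to handle the arc endpoints, observe that the factors of $\pi_\omega$ on the free nodes pass through to $\pi_{\omega\setminus v}$ on the Specht factor, and then invoke (\ref{omegaspecht}). You spell out the converse construction of a suitable dangle $v$ in the ``in particular'' part more explicitly than the paper does (the paper simply asserts the equivalence $\lambda\in\Lambda_\omega \Leftrightarrow |\lambda|=\omega\setminus v$ for some $v$ belonging to $\omega$), but this is a matter of detail rather than a different idea.
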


\begin{proof}
Note that for each arc ($i_p, j_p)$ of $v$ we have
$$T_{i_p}^{r}v=T_{j_p}^{m-r}v.$$ Now as $\{T_i^r\, : \, 0\leq r\leq
m-1\}$ form a set of orthogonal idempotents we have
$$T_{i_p}^rT_{j_p}^sv=T_{i_p}^{r}T_{i_p}^{m-s}v=\left\{ \begin{array}{ll}
  T_{i_p}^rv & \mbox{if $s=m-r$}\\ 0 &
  \mbox{otherwise.}\end{array}\right.$$ Thus we have that $\pi_\omega
(v\otimes x)=0$ unless $v$ belongs to $\omega$. Now it is easy to see
that $\pi_\omega$ acts on the free vertices of $v$ by $\pi_{\omega
  \setminus v}$. So if $v$ belongs to $\omega$ we get
$$\pi_\omega(v\otimes x) = T_{i_1}^{r_1}t_{i_2}^{r_2}\ldots
T_{i_l}^{r_l}v\otimes \pi_{\omega \setminus v}x $$ 
as required.
Now using Section 2.2 we have that 
$$\pi_{\omega \setminus v}\mathbf{S(\lambda)}\cong
\left\{ \begin{array}{ll} S(\lambda^0)\otimes S(\lambda^1)\otimes
  \ldots \otimes S(\lambda^{m-1}) & \mbox{if $|\lambda|=\omega
    \setminus v$} \\ 0 & \mbox{otherwise.} \end{array} \right.$$ 
Finally note that $\lambda\in \Lambda_\omega$ if and only if
$|\lambda|=\omega \setminus v$ for some $v$ belonging to
$\omega$. This proves the last part of the proposition. 
\end{proof}

\subsection{}\textbf{Truncation functors} 

We now consider the truncation functor defined by the idempotent $\pi_\omega$.
From now on we shall denote $ \pi_{\omega}
B^m_n \pi_{\omega}$ by $B^m_{\omega}$.  The truncation functor is defined by
\begin{align*}
f_{\omega} 	: &B^m_n{\text{\rm{-mod}}} \to B^m_{
  \omega}{\text{\rm{-mod}}} \\ 
				& M \longmapsto \pi_{\omega}M.
\end{align*}

Using Proposition \ref{omegastandard} and \cite[A3.11]{don2} we have
the following result.

\begin{prop}\label{qh}
(i) A complete set of non-isomorphic simple $B_\omega^m$-modules is
  given by $$\{f_\omega L_n(\lambda) \, : \, \lambda \in
  \Lambda_\omega\}.$$ 
(ii) A complete set of non-isomorphic indecomposable projective
  $B_\omega^m$-modules is given by $$\{f_\omega P_n(\lambda) \, : \,
  \lambda \in \Lambda_\omega\}.$$ 
(iii) The algebra $B^m_{\omega}$ is a
quasi-hereditary algebra with respect to the partial order $\leq_{{\rm
    opp}}$ on  
$\Lambda_{{\omega}}$.  Its standard modules are given by
$f_\omega\Delta_n(\lambda)$
for all $\lambda\in \Lambda_\omega$. 
\end{prop}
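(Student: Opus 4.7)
The plan is to invoke Donkin's general framework on idempotent truncations of quasi-hereditary algebras, as given in \cite[A3.11]{don2}, and to verify its hypotheses using Proposition \ref{omegastandard} and the co-saturation of $\Lambda_\omega$. Concretely: for $A = B_n^m$ (quasi-hereditary with respect to $\leq_{{\rm opp}}$ by Theorem \ref{A2'} under Assumption \ref{qhass}) and $e = \pi_\omega$, that theorem states that if the set of weights for which $e$ does not annihilate the simple (equivalently the standard) module is co-saturated in the quasi-hereditary order, then $eAe$ is itself quasi-hereditary with this co-saturated set as its poset, having standards $e\Delta_n(\lambda)$, simples $eL_n(\lambda)$, and indecomposable projectives $eP_n(\lambda)$. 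Thus once the hypotheses are verified, parts (i), (ii), (iii) are immediate.

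First I would check that $\Lambda_\omega$ is co-saturated in $\leq_{{\rm opp}}$. Suppose $\lambda \in \Lambda_\omega$ (so $|\lambda| \preceq \omega$) and take $\mu \geq_{{\rm opp}} \lambda$, i.e., $\mu \leq \lambda$ in the partial order of Section 4.1. By definition of $\leq$ we have $\mu^i \subseteq \lambda^i$ and $|\mu| \preceq |\lambda|$; transitivity of $\preceq$ on $\Lambda|m,n|$ then yields $|\mu| \preceq \omega$, so $\mu \in \Lambda_\omega$ as required. Next I would identify $\Lambda_\omega$ with the set of labels picked out by $\pi_\omega$. Proposition \ref{omegastandard} gives $\Lambda_\omega = \{\lambda : \pi_\omega \Delta_n(\lambda) \neq 0\}$, which settles the standard-module version of the hypothesis. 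Exactness of the functor $\pi_\omega \cdot$ together with the surjection $\Delta_n(\lambda) \twoheadrightarrow L_n(\lambda)$ shows that $\{\lambda : \pi_\omega L_n(\lambda) \neq 0\} \subseteq \Lambda_\omega$; the reverse inclusion is obtained inductively in $\leq_{{\rm opp}}$, using Corollary \ref{neccondition} to restrict the composition factors of $\Delta_n(\lambda)$ to $L_n(\mu)$ with $\mu \geq \lambda$, the co-saturation of $\Lambda_\omega$ to place any such larger $\mu$ back inside $\Lambda_\omega$, and the inductive hypothesis combined with a composition-factor tally on the non-zero $\pi_\omega \Delta_n(\lambda)$ to force $\pi_\omega L_n(\lambda) \neq 0$.

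The main obstacle is this last inductive step in identifying $\{\lambda : \pi_\omega L_n(\lambda) \neq 0\}$ with $\Lambda_\omega$; the co-saturation check and the easy inclusion are routine, and once the identification is complete the three parts of Proposition \ref{qh} drop out directly from \cite[A3.11]{don2}.
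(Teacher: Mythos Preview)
Your overall strategy is exactly the paper's: invoke Proposition~\ref{omegastandard} together with \cite[A3.11]{don2}. The paper says nothing more than that, so you are on the right track and are in fact supplying more detail than the paper does.

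However, your verification of the reverse inclusion $\Lambda_\omega \subseteq \{\lambda : \pi_\omega L_n(\lambda)\neq 0\}$ has a genuine gap. You write that co-saturation of $\Lambda_\omega$ ``places any such larger $\mu$ back inside $\Lambda_\omega$'', but this is the wrong direction: $\Lambda_\omega$ is co-saturated in $\leq_{\rm opp}$, i.e.\ closed \emph{downward} in $\leq$, whereas the composition factors $L_n(\mu)$ of $\Delta_n(\lambda)$ satisfy $\mu \geq \lambda$. So co-saturation does not force these $\mu$ into $\Lambda_\omega$. You can still observe that any $\mu\notin\Lambda_\omega$ has $\pi_\omega L_n(\mu)=0$ (since $\pi_\omega\Delta_n(\mu)=0$), but then your induction only tells you that the surviving composition factors of $\pi_\omega\Delta_n(\lambda)$ lie among $\pi_\omega L_n(\lambda)$ and the $\pi_\omega L_n(\mu)$ with $\mu>\lambda$, $\mu\in\Lambda_\omega$; knowing the latter are non-zero does \emph{not} force the former to be non-zero. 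Indeed, in a general quasi-hereditary algebra one can have an idempotent $e$ with $\{\lambda: e\Delta(\lambda)\neq 0\}$ co-saturated yet strictly larger than $\{\lambda: eL(\lambda)\neq 0\}$ (take $A$ basic with two simples $1<2$, $\Delta(2)$ of length two, and $e$ the primitive idempotent for $P(1)$).

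The paper's one-line proof does not address this point either, so you have correctly located the only non-formal step. One way to close the gap cleanly here is to note that Theorem~\ref{isom} (whose proof is independent of Proposition~\ref{qh}) computes $B^m_\omega$ explicitly and shows it has exactly $|\Lambda_\omega|$ simple modules; combined with your easy inclusion $\{\lambda:\pi_\omega L_n(\lambda)\neq 0\}\subseteq\Lambda_\omega$, this forces equality.
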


\noindent For $M\in B_n^m{\text{\rm{-mod}}}$ we write $M\in
\mathcal{F}_\omega(\Delta)$ to indicate that $M$ has a filtration with
subquotients belonging to $\{\Delta_n(\lambda)\, : \,
\lambda\in\Lambda_\omega\}$.  

\begin{prop}[A3.13 \cite{don2}]\label{exty}
Let $X, Y\in B_n^m{\text{\rm{-mod}}}$ with  $X\in
\mathcal{F}_\omega(\Delta)$. For all $i\geq 0$ we have  
$$\Ext^i_{B^m_n}(X,Y)\cong \Ext^i_{B^m_\omega}(f_\omega X,f_\omega Y).$$ 
\end{prop}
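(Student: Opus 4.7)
The plan is to derive this as a direct consequence of the general theory of truncation functors for quasi-hereditary algebras, as developed in the Appendix of \cite{don2}. All the structural input needed has already been assembled: by Proposition \ref{qh} the algebra $B_\omega^m$ is quasi-hereditary with standard modules $f_\omega \Delta_n(\lambda)$ for $\lambda \in \Lambda_\omega$, and we have observed that $\Lambda_\omega$ is a co-saturated subset of $(\Lambda(m,n), \leq_{\rm opp})$. The functor $f_\omega = \pi_\omega(-)$ is exact, being multiplication by an idempotent.

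The first step is to produce a projective resolution $P^\bullet \to X$ of $X$ in which every term is a direct sum of projectives $P_n(\lambda)$ with $\lambda \in \Lambda_\omega$. Since $X \in \mathcal{F}_\omega(\Delta)$, its top composition factors are of the form $L_n(\lambda)$ with $\lambda \in \Lambda_\omega$, so one may cover $X$ by such projectives. The key structural fact that keeps this iteration inside $\Lambda_\omega$ is that for each $\lambda \in \Lambda_\omega$, the projective $P_n(\lambda)$ has a $\Delta$-filtration whose sections $\Delta_n(\mu)$ satisfy $\mu \geq_{\rm opp} \lambda$; co-saturation of $\Lambda_\omega$ then forces every such $\mu$ into $\Lambda_\omega$. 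The kernel at each stage therefore remains in $\mathcal{F}_\omega(\Delta)$, and the inductive construction continues.

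The second step is to apply $f_\omega$. Exactness delivers a resolution $f_\omega P^\bullet \to f_\omega X$ in $B_\omega^m\mbox{-mod}$, and by Proposition \ref{qh}(ii) each $f_\omega P_n(\lambda)$ is projective over $B_\omega^m$, so this is a projective resolution. Finally, for each $\lambda \in \Lambda_\omega$ one identifies
\begin{equation*}
\Hom_{B_n^m}(P_n(\lambda),Y) \;\cong\; e_\lambda Y \;=\; e_\lambda \pi_\omega Y \;\cong\; \Hom_{B_\omega^m}(f_\omega P_n(\lambda), f_\omega Y),
\end{equation*}
where $e_\lambda$ is a primitive idempotent with $P_n(\lambda) = B_n^m e_\lambda$; the equality $e_\lambda \pi_\omega = e_\lambda$ holds because $P_n(\lambda)$ is a direct summand of $B_n^m \pi_\omega$ (equivalently, because $f_\omega P_n(\lambda) \neq 0$ by Proposition \ref{omegastandard}, together with uniqueness of projective covers). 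Assembling these into chain complexes and taking cohomology yields the claimed isomorphism $\Ext^i_{B_n^m}(X,Y) \cong \Ext^i_{B_\omega^m}(f_\omega X, f_\omega Y)$.

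The one point requiring care is ensuring that the projective resolution of $X$ can indeed be chosen inside $\mathcal{F}_\omega(\Delta)$; this is where co-saturation of $\Lambda_\omega$ is essential, and every other step is a formal consequence of the exactness of $f_\omega$ and the projective-preserving property established in Proposition \ref{qh}. In view of this, the argument reduces to citing \cite[A3.13]{don2} once the hypotheses of that result have been checked.
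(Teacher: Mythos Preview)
Your proposal is correct and aligns with the paper's treatment: the paper does not give a proof at all but simply cites \cite[A3.13]{don2}, relying on the earlier verification that $\Lambda_\omega$ is co-saturated and that $f_\omega$ behaves as in Proposition~\ref{qh}. You have additionally sketched the standard argument underlying that reference, which is accurate and matches Donkin's proof; your closing remark that everything reduces to citing \cite[A3.13]{don2} once the hypotheses are checked is exactly the paper's stance.
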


\noindent As $P_n(\lambda)\in \mathcal{F}_\omega(\Delta)$ for all
$\lambda\in \Lambda_\omega$ and $[\Delta_n(\mu):L_n(\lambda)]=\dim
\Hom (P_n(\lambda), \Delta_n(\mu))$ we have the following corollary. 

\begin{cor}\label{truncdec}
For all $\lambda, \mu\in \Lambda_\omega$ we have
$$[\Delta_n(\mu):L_n(\lambda)]=[f_\omega \Delta_n(\mu): f_\omega
  L_n(\lambda)].$$ 
\end{cor}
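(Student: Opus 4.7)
The plan is to chain together the tools assembled in the paragraph preceding the corollary: BGG-type reciprocity for quasi-hereditary algebras, the Ext-invariance of Proposition \ref{exty}, and the description of projectives under the truncation functor in Proposition \ref{qh}(ii).

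First I would start from the standard identity for quasi-hereditary algebras (mentioned in the sentence just above the corollary):
\begin{equation*}
[\Delta_n(\mu):L_n(\lambda)] \;=\; \dim \Hom_{B_n^m}(P_n(\lambda),\Delta_n(\mu)).
\end{equation*}
Since $\lambda\in\Lambda_\omega$, the comment preceding the corollary tells us that $P_n(\lambda)\in \mathcal{F}_\omega(\Delta)$, so Proposition \ref{exty} applies with $X=P_n(\lambda)$, $Y=\Delta_n(\mu)$ and $i=0$, yielding
\begin{equation*}
\Hom_{B_n^m}(P_n(\lambda),\Delta_n(\mu)) \;\cong\; \Hom_{B_\omega^m}(f_\omega P_n(\lambda),f_\omega \Delta_n(\mu)).
\end{equation*}

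Next I would invoke Proposition \ref{qh}: part (ii) identifies $f_\omega P_n(\lambda)$ as the projective cover of the simple $B_\omega^m$-module $f_\omega L_n(\lambda)$, while part (iii) says $f_\omega \Delta_n(\mu)$ is the standard module for $B_\omega^m$ labelled by $\mu$. The standard quasi-hereditary identity applied inside $B_\omega^m$ then gives
\begin{equation*}
\dim \Hom_{B_\omega^m}(f_\omega P_n(\lambda),f_\omega \Delta_n(\mu)) \;=\; [f_\omega \Delta_n(\mu):f_\omega L_n(\lambda)].
\end{equation*}
Concatenating the three displayed equalities produces the claim.

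There is no real obstacle here; the corollary is essentially a packaging of Proposition \ref{exty} together with the identification of truncated projectives and simples from Proposition \ref{qh}. The only point requiring a moment of care is that $f_\omega$ does not kill $P_n(\lambda)$ or $\Delta_n(\mu)$ for $\lambda,\mu\in\Lambda_\omega$, and that $f_\omega L_n(\lambda)\neq 0$, all of which are guaranteed by Proposition \ref{omegastandard} and Proposition \ref{qh}(i)--(ii).
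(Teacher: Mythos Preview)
Your argument is correct and follows exactly the route indicated by the paper: the sentence preceding the corollary already records that $P_n(\lambda)\in\mathcal{F}_\omega(\Delta)$ and that $[\Delta_n(\mu):L_n(\lambda)]=\dim\Hom(P_n(\lambda),\Delta_n(\mu))$, and you have simply spelled out how Proposition~\ref{exty} and Proposition~\ref{qh} combine to finish the job. There is nothing to add.
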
 

\subsection{}\textbf{The idempotent subalgebras $B^m_{{{\omega}}}$} 

We now wish to understand the structure of these idempotent
subalgebras.
Therefore we start by considering the image, in $B_{{\omega}}^m$, of the
generators of the cyclotomic Brauer algebra $B_n^m$.

\begin{lem}\label{imagegen}
Let $1\leq i,j \leq n$ and let $\omega$ be an $m$-composition of
$n$. Then we have\\ 
(i) $\pi_\omega t_i^k\pi_\omega = \xi^{-kr}\pi_\omega$ if $i\in
[\omega_r]$ for some $0\leq r\leq m-1$.\\ 
(ii) $\pi_\omega t_{i,j}\pi_\omega\neq 0$ if and only if $i,j\in
[\omega_r]$ for some $0\leq r\leq m-1$.\\ 
(iii) $\pi_\omega e_{i,j}\pi_\omega\neq 0$ if and only if $i\in
[\omega_r]$ and $j\in [\omega_{m-r}]$ for some $0\leq r\leq m-1$. 
\end{lem}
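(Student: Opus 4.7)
The overall strategy is to exploit the fact that $T_i^r$ is the idempotent projecting onto the $\xi^r$-eigenspace of $t_i$ on strand $i$, so that from Definition \ref{Ti} one reads off the basic identities
\[
t_i^k T_i^r = \xi^{-kr} T_i^r \quad\text{and}\quad T_i^r T_i^s = \delta_{r,s} T_i^r,
\]
together with the commutation of $T_i^r$ with $T_j^s$ whenever $i \neq j$. Each of (i), (ii), (iii) will then reduce to pushing the given generator through the product $\pi_\omega = \prod_k T_k^{r(k)}$, where $r(k)$ is the unique index with $k \in [\omega_{r(k)}]$, and reading off when orthogonality forces the result to vanish.

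For (i), since $t_i^k$ commutes with every $T_j^s$ for $j \neq i$, I pull $t_i^k$ through all factors of $\pi_\omega$ except $T_i^r$; using $t_i^k T_i^r = \xi^{-kr} T_i^r$ and $\pi_\omega^2 = \pi_\omega$, the identity $\pi_\omega t_i^k \pi_\omega = \xi^{-kr} \pi_\omega$ drops out at once. For (ii), the wreath-product relation $t_{i,j} t_i t_{i,j} = t_j$ upgrades to the swap $t_{i,j} T_i^r T_j^s = T_j^r T_i^s t_{i,j}$, and $t_{i,j}$ commutes with $T_k^s$ for $k \neq i,j$. Writing $r = r(i)$, $s = r(j)$, the product $\pi_\omega t_{i,j} \pi_\omega$ therefore contains at strands $i$ and $j$ the factors $T_i^r T_i^s$ and $T_j^s T_j^r$, which by orthogonality vanish unless $r = s$; when $r = s$ the factors collapse and the product reduces to $\pi_\omega t_{i,j}$, which is nonzero since it is a linear combination of distinct diagrams with nonzero coefficients.

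For (iii), I will apply the pairing relation (\ref{pairing}): viewing the top arc $\{i,j\}$ of $e_{i,j}$ (with $i<j$) as an arc of an $(m,n,l)$-dangle, one has $T_i^r e_{i,j} = T_j^{m-r} e_{i,j}$, and the mirror identity $e_{i,j} T_i^r = e_{i,j} T_j^{m-r}$ for the bottom arc. After pulling every $T_k^{r(k)}$ with $k \neq i, j$ past $e_{i,j}$, the resulting expression at strand $j$ contains the factor $T_j^{r(j)} T_j^{m-r(i)}$, which by orthogonality vanishes unless $r(j) = m - r(i)$. The corresponding computation on the bottom produces the same constraint, and when it is satisfied the product simplifies to a nonzero scalar multiple of a diagram in $\pi_\omega B_n^m \pi_\omega$.

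The main obstacle will be the orientation bookkeeping for (iii): the pairing identity (\ref{pairing}) is stated for arcs of a dangle with a fixed left-to-right orientation, and I will have to check carefully that it transfers with the correct sign to both the top and bottom arcs of $e_{i,j}$ sitting inside a full diagram, producing precisely the left- and right-multiplication identities used above. Once this convention is pinned down, each of (i)--(iii) collapses to a short orthogonality argument.
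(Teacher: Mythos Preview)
Your proposal is correct and follows essentially the same approach as the paper. The paper's own proof is a single sentence citing exactly the ingredients you use --- the definition of $\pi_\omega$, the pairing relation (\ref{pairing}), and the orthogonality of the idempotents $T_i^r$ for fixed $i$ --- so your write-up is simply a fully spelled-out version of that sketch, including the explicit swap identity $t_{i,j}T_i^rT_j^s = T_j^rT_i^s t_{i,j}$ for (ii) and the left/right versions of (\ref{pairing}) for (iii).
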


\begin{proof}
This follows from the definition of $\pi_\omega$, equation
(\ref{pairing}) and the fact that the $T_i^r$'s for $0\leq r\leq m-1$
(and fixed $i$) are orthogonal idempotents.
\end{proof}

We now state the main result of this section.

\begin{thm}\label{isom} Let $\omega$ be an $m$-composition of $n$.
The algebra $B^m_{{{\omega}}}$ is isomorphic to a product of Brauer
and walled Brauer algebras with parameters $\overline{\delta}_r$ for
$0\leq r\leq \lfloor m/2\rfloor$.  More specifically
$$B^m_{{{\omega}}} \cong
B({\omega_{0}},\overline{\delta}_0)
\otimes \bigotimes_{r=1}^{\lfloor m/2\rfloor} WB(\omega_{r},
\omega_{m-r},\overline{\delta}_r) 
$$
if $m$ is odd, and
$$B^m_{{{\omega}}} \cong
B({\omega_{0}},\overline{\delta}_0)
\otimes \left( \bigotimes_{r=1}^{  (m/2)-1} WB(\omega_{r},
\omega_{m-r},\overline{\delta}_r) 
\right) \otimes 
B({\omega_{m/2}},\overline{\delta}_{m/2})
$$ if $m$ is even.  
\end{thm}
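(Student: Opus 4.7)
The plan is to construct an explicit algebra homomorphism $\phi\colon A\to B_{\omega}^m$, where $A$ denotes the tensor product on the right hand side, by specifying $\phi$ on the standard diagrammatic generators of each tensor factor, and then verifying that $\phi$ is bijective by a dimension count. For each $r\in\{0,1,\ldots,\lfloor m/2\rfloor\}$ the $r$-th tensor factor is identified with the sub-diagram algebra on the node set $\{[\omega_{r-1}]+1,\ldots,[\omega_r]\}$, together with $\{[\omega_{m-r-1}]+1,\ldots,[\omega_{m-r}]\}$ when $r\neq 0,m/2$; its symmetric-group generators map to $\pi_\omega t_{i,i+1}\pi_\omega$ and its arc generators to $\pi_\omega e_{i,j}\pi_\omega$ for the appropriate $i,j$. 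Lemma~\ref{imagegen} ensures that these images are non-zero precisely for the allowed index pairs, which is how the Brauer / walled Brauer structure emerges.

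I would then check all defining relations. The braid, commuting, and involutive relations, together with compatibility between symmetric-group and arc generators, descend directly from the corresponding relations in $B_n^m$, while commutativity of generators from different tensor factors holds because they act on disjoint sets of strands. The only non-trivial relation is the loop closure $\varepsilon^2=\overline{\delta}_r\,\varepsilon$. For $i\in[\omega_r]$ and $j\in[\omega_{m-r}]$, all the $T_k^{r_k}$ with $k\neq i,j$ commute past the middle $e_{i,j}$ and are reabsorbed into $\pi_\omega$ on the outside, reducing the computation to
\[
e_{i,j}\,T_i^{r}T_j^{m-r}\,e_{i,j}
=\tfrac{1}{m^2}\sum_{a,b}\xi^{ar+b(m-r)}\,e_{i,j}\,t_i^{a}t_j^{b}\,e_{i,j}.
\]
A direct orientation check, with the loop oriented so it points downward at its leftmost central node, gives $e_{i,j}\,t_i^{a}t_j^{b}\,e_{i,j}=\delta_{a-b}\,e_{i,j}$; writing $c=a-b$ and carrying out the inner sum collapses the double sum to $\tfrac{1}{m}\sum_{c}\xi^{cr}\delta_c\,e_{i,j}=\overline{\delta}_r\,e_{i,j}$, exactly the parameter in the statement.

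It remains to show that $\phi$ is a bijection. Surjectivity uses Lemma~\ref{imagegen} once more: any $\pi_\omega X\pi_\omega$ with $X$ an $(m,n)$-diagram vanishes unless every through strand of $X$ lies within a single block $[\omega_r]$ and every arc joins $[\omega_r]$ to $[\omega_{m-r}]$, and by Lemma~\ref{imagegen}(i) together with the pairing identity $T_i^{r}v=T_j^{m-r}v$ from the proof of Proposition~\ref{omegastandard}, every label on $X$ can be absorbed into a scalar, leaving $\pi_\omega X\pi_\omega$ a scalar multiple of an unlabelled Brauer / walled-Brauer diagram lying in the image of $\phi$. Injectivity then follows by a dimension count: the reduced unlabelled diagrams compatible with $\omega$ are in bijection with the product of standard Brauer and walled-Brauer bases. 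The main obstacle is the orientation bookkeeping in the loop computation, since it is precisely the resulting $\delta_{a-b}$ (rather than some sign-flipped variant) that forces $\overline{\delta}_r=\tfrac{1}{m}\sum_c\xi^{cr}\delta_c$; any other convention would yield a different linear combination of the $\delta_c$.
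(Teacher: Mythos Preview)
Your proposal is correct and follows essentially the same route as the paper. The paper defines the isomorphism in the opposite direction, $\phi\colon B_\omega^m\to A$, sending $\pi_\omega t_{i,j}\pi_\omega\mapsto\sigma_{i,j}$ and $\pi_\omega e_{i,j}\pi_\omega\mapsto u_{i,j}$, and appeals directly to the diagrammatic description for bijectivity; the only relation it checks explicitly is the same loop relation $(\pi_\omega e_{i,j}\pi_\omega)^2=\overline{\delta}_r\,\pi_\omega e_{i,j}\pi_\omega$, but it simplifies first via the pairing identity $e_{i,j}T_i^rT_j^{m-r}=e_{i,j}(T_i^r)^2=e_{i,j}T_i^r$ and then reduces to the single sum $\tfrac{1}{m}\sum_a\xi^{ar}\delta_a$, whereas you carry the double sum and collapse it by substituting $c=a-b$. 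Both computations are valid and yield the same parameter $\overline{\delta}_r$; your treatment of surjectivity and the dimension count is, if anything, slightly more explicit than the paper's.
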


\begin{rem}
In our definition of multiplication for $B^m_n$ 
we chose one of two possible orientations of the closed loops. 
Had we favoured the alternative orientation, the above proposition
would be stated in terms of the conjugate parameters
$\overline{\delta}_r$ such that  $m/2 \leq
r \leq m-1$. 
This makes no difference to the representation theory as we obtain
non-semisimple specialisations only when these parameters are integral
--- in which case $\overline{\delta}_r=\overline{\delta}_{m-r}$.
\end{rem}

\begin{proof} We will assume that $m$ is even in the proof. The case
  $m$ odd is obtained by ignoring all the terms corresponding to
  $m/2$. 

We view the tensor product of Brauer and walled Brauer algebras as a
diagram algebra spanned by certain Brauer diagrams with $n$ northern
and southern nodes. More precisely, as vector spaces, we embed
$B({\omega_{0}},\overline{\delta}_0) \otimes \left(
\bigotimes_{0<r<m/2} WB(\omega_{r}, \omega_{m-r},\overline{\delta}_r)
\right) \otimes B({\omega_{m/2}},\overline{\delta}_{m/2})$ into the
vector space $B(n)$ by partitioning the $n$ northern and southern
nodes according to $\omega$, that is we draw a wall after the first
$\omega_0$ nodes, then another wall after the next $\omega_1$ nodes,
etc. We embed the diagrams in $B(\omega_0, \overline{\delta_0})$ using
the first $\omega_0$ northern and southern nodes. For $0<r<m/2$ we
embed the diagrams in $B(\omega_r, \omega_{m-r},\overline{\delta_r})$
using all nodes $i, \bar{i}\in [\omega_r]$ or
$[\omega_{m-r}]$. Finally, we embed $B(\omega_{m/2},
\overline{\delta}_{m/2})$ using all nodes $i, \bar{i}\in
         [\omega_{m/2}]$. An example of such a digram is given in
         Figure \ref{embedding}.

\begin{figure}[ht]
\includegraphics[width=8cm]{embedding.pdf}
\caption{An example of the embedding of $$B(3,
  \overline{\delta}_0)\otimes WB(2,3,\overline{\delta}_1) \otimes
  WB(3,1,\overline{\delta}_2) \otimes B(2, \overline{\delta}_3)$$ into
  $B(14)$ corresponding to the $6$-composition $\omega$ of $14$ given
  by $\omega=(3,2,3,2,1,3)$.}
\label{embedding}
\end{figure}

Now the multiplication is given by concatenation. Note that each
closed loop obtained by concatenation only contains nodes $i\in
[\omega_r]$ or $[\omega_{m-r}]$ for some $0\leq r\leq \lfloor
m/2\rfloor$; we then remove this closed loop and multiply by the
scalar $\overline{\delta_r}$.

We denote by $\sigma_{i,j}$, resp. $u_{i,j}$, the unoriented version
of $t_{i,j}$, resp. $e_{i,j}$. Now define the map
$$\phi:B^m_{{{\omega}}} \rightarrow
B({\omega_{0}},\overline{\delta}_0) \otimes \left(
\bigotimes_{0<r<m/2} WB(\omega_{r}, \omega_{m-r},\overline{\delta}_r)
\right) \otimes B({\omega_{m/2}},\overline{\delta}_{m/2})$$ on
generators by setting $\phi(\pi_\omega)=1$, $\phi(\pi_\omega
t_{i,j}\pi_\omega)=\sigma_{i,j}$ for all $i<j\in [\omega_r]$ for some
$0\leq r\leq m-1$, and $\phi(\pi_\omega e_{i,j}\pi_\omega)=u_{i,j}$
for all $i<j$ with $i\in [\omega_r]$ and $j\in [\omega_{m-r}]$ for
some $0\leq r\leq \lfloor m/2 \rfloor$. It is clear from the
description in terms of diagrams that $\phi$ gives a bijection and
that all the relations involving only the $\sigma_{i,j}$'s, or the
$\sigma_{i,j}$'s and the $u_{i,j}$'s are satisfied. It remains to show
that for $i\in [\omega_r]$ and $j\in [\omega_{m-r}]$ we have
$(\pi_\omega e_{i,j}\pi_\omega)^2=\overline{\delta_r} (\pi_\omega
e_{i,j}\pi_\omega)$. Now we have
\begin{eqnarray*}
(\pi_\omega e_{i,j}\pi_\omega)^2 &=& \pi_\omega e_{i,j}\pi_\omega e_{i,j} \pi_\omega\\
&=& \pi_\omega e_{i,j} T_i^r T_j^{m-r} e_{i,j} \pi_\omega\\
&=& \pi_\omega e_{i,j} (T_i^r)^2e_{i,j} \pi_\omega\\
&=& \pi_\omega e_{i,j} T_i^r e_{i,j}\pi_\omega\\
&=& \sum_{a=0}^{m-1} \xi^{ar} \pi_\omega e_{i,j} t_i^a e_{i,j}\pi_\omega 
= \sum_{a=0}^{m-1} \xi^{ar}  \delta_a \pi_\omega e_{i,j} \pi_\omega 
= \overline{\delta_r} (\pi_\omega e_{i,j} \pi_\omega).
\end{eqnarray*}
\end{proof}

\subsection{}\textbf{Homomorphisms and decomposition numbers} 

Recall that the standard modules for the classical Brauer algebra
$B(n,\delta)$ are indexed by partitions $\lambda$ of $n-2l$ for $0\leq
l \leq \lfloor n/2 \rfloor$. For each partition $\lambda$ of $n-2l$,
the standard $B(n, \delta)$-module $\Delta_{B(n)}(\lambda)$ can be
constructed by inflating the Specht module $S(\lambda)$ along
$V(1,n,l)$, and it has simple head $L_{B(n)}(\lambda)$. The standard
modules for the walled Brauer algebra $WB(r, s,\delta)$ are indexed by
bi-partitions $(\lambda, \mu)$ of $(r-l, s-l)$ for $0\leq l\leq {\rm
  min}\{r, s\}$.  For each bi-partition $(\lambda, \mu)$, the standard
$WB(r,s,\delta)$-module $\Delta_{WB(r,s)}(\lambda, \mu)$ can be
constructed similarly by inflating the tensor product of Specht
modules $S(\lambda)\otimes S(\mu)$ along the corresponding subspace of
dangles, and it has simple head $L_{WB(r,s)}(\lambda, \mu)$.

\begin{prop}\label{truncstandard}
For $\lambda\in \Lambda_\omega$ the module
$\Delta_n^\omega(\lambda)=f_\omega \Delta_n(\lambda)$ 
is isomorphic to
$$\Delta_{B(\omega_0)}(\lambda^0)\otimes \bigotimes_{1\leq r\leq
  \lfloor m/2\rfloor} \Delta_{WB(\omega_r, \omega_{m-r})}(\lambda^r,
\lambda^{m-r}) \otimes \Delta_{B(\omega_{m/2})}(\lambda^{m/2})$$
(under the isomorphism given in Theorem \ref{isom}) where we ignore
the last term when $m$ is odd.
\end{prop}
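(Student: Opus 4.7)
The plan is to unfold $\pi_\omega\Delta_n(\lambda)$ using Proposition \ref{omegastandard} and then match the resulting explicit description, color class by color class, with the stated tensor product of standard modules under the algebra isomorphism of Theorem \ref{isom}.

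First I would fix a dangle $v\in V(m,n,l)$ belonging to $\omega$. Since $\lambda\in\Lambda_\omega$, we have $|\lambda|\preceq\omega$; writing $a_r$ for the components of the signed difference, the arcs of $v$ partition naturally into $a_0$ arcs entirely inside $[\omega_0]$, for each $0<r<m/2$ exactly $a_r$ arcs joining $[\omega_r]$ to $[\omega_{m-r}]$, and (if $m$ is even) $a_{m/2}$ arcs entirely inside $[\omega_{m/2}]$. The pairing identity (\ref{pairing}) forces each arc to carry a canonical label once the idempotents $T^{r_p}$ are attached, so no label data is lost on passing to the image in $B^m_\omega$. The remaining free vertices distribute so that each $[\omega_r]$ contains exactly $|\lambda^r|$ free nodes, and by (\ref{omegaspecht}) we have $\pi_{\omega\setminus v}\mathbf{S}(\lambda)\cong S(\lambda^0)\otimes\cdots\otimes S(\lambda^{m-1})$.

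Next I would package the data by color class. The arcs internal to $[\omega_0]$, combined with the Specht factor $S(\lambda^0)$, produce exactly the inflation $V(1,\omega_0,a_0)\otimes S(\lambda^0)=\Delta_{B(\omega_0)}(\lambda^0)$; for $0<r<m/2$ the arcs from $[\omega_r]$ to $[\omega_{m-r}]$ combined with $S(\lambda^r)\otimes S(\lambda^{m-r})$ give $\Delta_{WB(\omega_r,\omega_{m-r})}(\lambda^r,\lambda^{m-r})$; and when $m$ is even, the $r=m/2$ case mirrors the $r=0$ case. Summing over all $v$ belonging to $\omega$ with $|\lambda|=\omega\setminus v$ (the only dangles that contribute, by Proposition \ref{omegastandard}) yields the desired vector-space identification of $f_\omega\Delta_n(\lambda)$ with the claimed tensor product.

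Finally I would verify that this identification is $B^m_\omega$-linear. Under $\phi$ from Theorem \ref{isom} it suffices to check that $\pi_\omega t_{i,j}\pi_\omega$ and $\pi_\omega e_{i,j}\pi_\omega$ act on a basis vector $T_{i_1}^{r_1}\cdots T_{i_l}^{r_l}v\otimes x$ as $\sigma_{i,j}$ and $u_{i,j}$, respectively. The symmetric-group generators $\pi_\omega t_{i,j}\pi_\omega$ with $i,j\in[\omega_r]$ act, after relabelling using (\ref{pairing}), as the transposition inside the relevant color class, which is immediate from Lemma \ref{imagegen}. The subtle step, and the main obstacle, is the action of $\pi_\omega e_{i,j}\pi_\omega$ with $i\in[\omega_r]$ and $j\in[\omega_{m-r}]$: when a closed loop is created during the action, one must check that the accumulated scalar is $\overline{\delta}_r$ rather than a raw $\delta_a$. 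This is precisely the identity $(\pi_\omega e_{i,j}\pi_\omega)^2=\overline{\delta}_r(\pi_\omega e_{i,j}\pi_\omega)$ established at the end of the proof of Theorem \ref{isom}, and the same computation applies uniformly to each loop closure that arises during the module action. Once this is confirmed, the action of every generator matches, and the identification is the required isomorphism of standard modules.
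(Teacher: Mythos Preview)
Your argument is correct but follows a different and more hands-on route than the paper. The paper's proof is short: it invokes Proposition~\ref{qh}(iii) to conclude abstractly that $f_\omega\Delta_n(\lambda)$ is already a standard module for the quasi-hereditary algebra $B^m_\omega$, so the only task is to identify its label; this is done by one further localisation, down to the subalgebra where no arcs remain, where Proposition~\ref{omegastandard} and (\ref{omegaspecht}) immediately give the tensor product of Specht modules $S(\lambda^0)\otimes\cdots\otimes S(\lambda^{m-1})$, pinning down the label. You instead build the isomorphism explicitly: decomposing the surviving dangles colour class by colour class, matching vector spaces with the inflation descriptions of the Brauer and walled Brauer standard modules, and then verifying equivariance on generators. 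Your route gives a concrete map and makes the combinatorics visible, but it obliges you to check every case of the $\pi_\omega e_{i,j}\pi_\omega$-action on a dangle (arc creation, joining an arc to a free node, etc.), not only the loop-closure case you single out; the paper's route sidesteps all of that bookkeeping by leaning on the general theory of co-saturated truncation from \cite[Appendix]{don2}, at the price of being less constructive.
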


\begin{proof}
By Proposition \ref{qh}(iii), we know that $f_\omega
\Delta_n(\lambda)$ is a standard module. So we only need to show that
it is labelled by the same partition. Now the required tensor product
of standard modules is characterised by the fact that when we localise
this module to
$$B(|\lambda^0|)\otimes \left(\bigotimes_{1\leq r\leq \lfloor m/2
  \rfloor}WB(|\lambda^r|, |\lambda^{m-r}|)\right) \otimes B(|\lambda^{m/2}|)$$
we get a module isomorphic to 
$$S(\lambda^0)\otimes \left(\bigotimes_{1\leq
  r\leq \lfloor m/2 \rfloor} (S(\lambda^r)\otimes
S(\lambda^{m-r}))\right)\otimes S(\lambda^{m/2}).$$
 But it is clear that
$f_\omega \Delta_n(\lambda)$ satisfies this condition using
Proposition \ref{omegastandard} and Section \ref{omegaspecht}.
\end{proof}

\begin{cor}
Let $\lambda, \mu\in \Lambda(m,n)$ and define $\omega=|\lambda|$.  Then
(i) $\Hom_{B^m_n}(\Delta_n(\lambda), \Delta_n(\mu))$ is isomorphic
to 
\begin{eqnarray*}
\Hom_{B(\omega_0,\overline{\delta_0})}(\Delta_{B}(\lambda^0),
\Delta_{B}(\mu^0)) \otimes \bigotimes_{0<r<m/2}\Hom_{WB(\omega_r,
  \omega_{m-r},\overline{\delta_r})}(\Delta_{WB}(\lambda^r,
\lambda^{m-r}), \Delta_{WB}(\mu^r, \mu^{m-r})) && \\ \otimes
\Hom_{B(\omega_{m/2},\overline{\delta_{m/2}})}(\Delta_{B}(\lambda^{m/2}),
\Delta_{B}(\mu^{m/2})). && 
\end{eqnarray*}  
(ii) The decomposition numbers $[\Delta_n(\mu):L_n(\lambda)]$ factorise as
$$[\Delta_{B}(\mu^0): L_{B}(\lambda^0)] \times \prod_{0<r<m/2}
    [\Delta_{WB}(\mu^r, \mu^{m-r}): L_{WB}(\lambda^r, \lambda^{m-r})]
    \times [\Delta_{B}(\mu^{m/2}): L_{B}(\lambda^{m/2})].$$ 
(We ignore the last term in (i) and (ii) when $m$ is odd).
\end{cor}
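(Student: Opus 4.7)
The plan is to reduce each statement to the corresponding fact for the idempotent subalgebra $B^m_\omega$ (via Proposition \ref{exty} and Corollary \ref{truncdec}), then invoke the tensor product decomposition of Theorem \ref{isom} together with the description of standard modules in Proposition \ref{truncstandard}, and finally appeal to the standard behaviour of Hom spaces and composition multiplicities under outer tensor products of algebras.

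First I would observe that since $\omega=|\lambda|$, we have $\lambda\in\Lambda_\omega$ and $\Delta_n(\lambda)\in\mathcal{F}_\omega(\Delta)$. For (i), Proposition \ref{exty} applied with $i=0$ then gives
\begin{equation*}
\Hom_{B^m_n}(\Delta_n(\lambda),\Delta_n(\mu))\;\cong\;\Hom_{B^m_\omega}(f_\omega\Delta_n(\lambda),f_\omega\Delta_n(\mu)).
\end{equation*}
If $\mu\notin\Lambda_\omega$ then $f_\omega\Delta_n(\mu)=0$ by Proposition \ref{omegastandard}, and one checks that the right-hand side of (i) is also zero (in fact, in this case $|\mu|\not\preceq|\lambda|$, and we can cross-check vanishing via Proposition \ref{parity}). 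So assume $\mu\in\Lambda_\omega$. Then by Theorem \ref{isom} the algebra $B^m_\omega$ is an outer tensor product of Brauer and walled Brauer algebras, and by Proposition \ref{truncstandard} each of $f_\omega\Delta_n(\lambda)$ and $f_\omega\Delta_n(\mu)$ is the corresponding outer tensor product of standard modules. Since the base field $k$ is algebraically closed and all the standard modules involved are finite dimensional, the standard identity
\begin{equation*}
\Hom_{A_1\otimes\cdots\otimes A_t}(M_1\boxtimes\cdots\boxtimes M_t,\,N_1\boxtimes\cdots\boxtimes N_t)\;\cong\;\bigotimes_{j=1}^{t}\Hom_{A_j}(M_j,N_j)
\end{equation*}
yields the factorisation in (i).

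For (ii), I would start from Corollary \ref{truncdec}, which gives
\begin{equation*}
[\Delta_n(\mu):L_n(\lambda)]=[f_\omega\Delta_n(\mu):f_\omega L_n(\lambda)]
\end{equation*}
whenever $\lambda,\mu\in\Lambda_\omega$; the case $\mu\notin\Lambda_\omega$ is handled by Proposition \ref{parity}, which makes the left-hand side vanish, while on the right-hand side at least one factor is undefined/zero as in (i). By Proposition \ref{qh}(i), $f_\omega L_n(\lambda)$ is simple, and since it is a quotient of $f_\omega\Delta_n(\lambda)$ (exactness of $f_\omega$ applied to $\Delta_n(\lambda)\twoheadrightarrow L_n(\lambda)$), Proposition \ref{truncstandard} together with Theorem \ref{isom} identify it as the simple head of the outer tensor product of standard modules, i.e.
\begin{equation*}
f_\omega L_n(\lambda)\;\cong\;L_{B}(\lambda^0)\boxtimes\bigotimes_{0<r<m/2}L_{WB}(\lambda^r,\lambda^{m-r})\boxtimes L_{B}(\lambda^{m/2})
\end{equation*}
(the last factor being omitted when $m$ is odd). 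The decomposition number on $B^m_\omega$ then factorises by the standard fact that for modules over an outer tensor product $A_1\otimes\cdots\otimes A_t$ of finite-dimensional algebras over an algebraically closed field,
\begin{equation*}
[M_1\boxtimes\cdots\boxtimes M_t:N_1\boxtimes\cdots\boxtimes N_t]=\prod_j[M_j:N_j]_{A_j}
\end{equation*}
whenever the $N_j$ are simple. This yields (ii).

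The only genuinely non-routine input is the identification of $f_\omega L_n(\lambda)$ as the tensor product of simple heads; every other step is either quoted from the preceding sections or a standard tensor-product-of-algebras fact over an algebraically closed field. I expect the main obstacle to be verifying cleanly that the exact functor $f_\omega$ sends $L_n(\lambda)$ to the simple head of $f_\omega\Delta_n(\lambda)$ (rather than to zero or to something larger), which follows from Proposition \ref{qh}(i) combined with exactness applied to the short exact sequence $0\to\mathrm{rad}\,\Delta_n(\lambda)\to\Delta_n(\lambda)\to L_n(\lambda)\to 0$ and the fact that $f_\omega L_n(\lambda)\neq 0$ since $\lambda\in\Lambda_\omega$.
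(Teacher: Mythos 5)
Your proposal follows essentially the same route as the paper: reduce to $B^m_\omega$ via Proposition \ref{exty} and Corollary \ref{truncdec}, identify the truncated standards via Theorem \ref{isom} and Proposition \ref{truncstandard}, and conclude by the standard behaviour of $\Hom$ and composition multiplicities over outer tensor products. Your explicit identification of $f_\omega L_n(\lambda)$ with the tensor product of simple heads is a point the paper leaves implicit, and your argument for it (exactness of $f_\omega$ plus Proposition \ref{qh}) is sound; likewise your treatment of the case $\mu\notin\Lambda_\omega$ via $f_\omega\Delta_n(\mu)=0$ is an acceptable alternative to the paper's appeal to Corollary \ref{neccondition}.

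There is, however, one missing reduction. The corollary allows $\lambda$ to be an $m$-partition of $n-2l$ with $l>0$, in which case $\omega=|\lambda|$ is an $m$-composition of $n-2l$, not of $n$. All of the Section 5 machinery you invoke --- the idempotent $\pi_\omega$, the set $\Lambda_\omega$, the algebra $B^m_\omega$ and Theorem \ref{isom} --- is set up only for $\omega$ an $m$-composition of $n$, so your opening assertion ``$\lambda\in\Lambda_\omega$ and $\Delta_n(\lambda)\in\mathcal{F}_\omega(\Delta)$'' does not parse in general. The paper's proof opens by invoking localisation (\ref{localisation}) to reduce to the case where $\lambda$ is an $m$-partition of $n$: since $G$ is a full embedding with $G\Delta_{n-2}(\nu)=\Delta_n(\nu)$ and $F$ is exact with $F\Delta_n(\nu)=\Delta_{n-2}(\nu)$, both the $\Hom$ space and the decomposition number are unchanged on passing from $B^m_n$ to $B^m_{n-2l}$ (noting that a nonzero $\Hom$ or decomposition number forces $\mu\in\Lambda(m,n-2l)$). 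You should insert this step at the start; with it, the rest of your argument goes through.
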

 \begin{proof} By localisation (\ref{localisation}), we can always
   assume that $\lambda$ is an $m$-partition of $n$. Now,
we have seen in Corollary \ref{neccondition} that a necessary
condition for a non-zero homomorphism (or decomposition number) is
that $\lambda \geq \mu$. Thus we have $\lambda, \mu\in \Lambda_\omega$
where $\omega = |\lambda|$.  We then obtain the results using
Propositions \ref{truncstandard} and \ref{exty} and Corollary
\ref{truncdec}.
\end{proof}

\begin{rem} Let $\omega$ be an $m$-composition of $n$ and let
  $\lambda, \mu\in \Lambda_\omega$. Then, using Proposition
  \ref{exty}, we have more generally that
$$\Ext^i_{B_n^m}(\Delta_n(\lambda),\Delta_n(\mu))\cong
  \Ext^i_{B_{\omega}^m}(\Delta_n^\omega(\lambda),
  \Delta_n^\omega(\mu))$$ for all $i\geq 0$.
\end{rem}

\begin{rem}
(i) With this factorisation of the decomposition numbers at hand, one can
easily deduce the block structure of $B^m_n$ (in terms of that of the
walled and classical Brauer algebras).  

(ii) The decomposition numbers for the Brauer and
walled Brauer algebras in characteristic zero are known by
\cite{marbrauer} and \cite{cdv}, and so we have determined the
decomposition numbers for the cyclotomic Brauer algebra in
characteristic zero.  
\end{rem}

\section{Appendix: The unoriented cyclotomic Brauer algebra}

There is another version of the cyclotomic Brauer algebra, which we
will denote by $\tilde{B}_n^m(\delta)$, spanned by unoriented
reduced $(m,n)$-diagrams. As a vector space, it coincides with $B_n^m$
but the multiplication is simply given by concatenation, addition (in
$\ZZ/m\ZZ$) of the labels on each strands, and replacing any closed
loop labelled by $r$ with scalar multiplication by $\delta_r$.

All the arguments in this paper apply to the unoriented cyclotomic
Brauer algebra as well, and it turns out that the corresponding
idempotent subalgebras are isomorphic to a tensor product of classical
Brauer algebras in this case. Hence this gives a factorisation of the
decomposition numbers of $\tilde{B}_n^m$ as a product of decomposition
numbers for the classical Brauer algebras. We will now briefly sketch
the modifications required.

The algebra $\tilde{B}_n^m$ is still an iterated inflation of the
algebras $H_n^m$ but along the spaces of unoriented dangles. All the
results in Section 3 hold as before if we replace $m-r$ by $r$ in
Lemma \ref{A4}(ii) and Proposition \ref{indres}. In Section 4, note
that in equation (\ref{pairing}) and equation (\ref{actionsym1}) we
have to replace $m-r$ with $r$ again. Now following the argument in
Section 4.2 we obtain
$$[\Delta_n(\lambda)\!\downarrow _{H_n^m}:\mathbf{S}(\mu)] =
\prod_{0\leq j\leq m-1}\sum_{\begin{subarray}{c}\eta\vdash 2a_j
    \\ \eta \, \mbox{{\tiny
        even}}\end{subarray}}c_{\lambda^j,\eta}^{\mu^j}.$$ We modify
the partial ordering $\preceq$ and $\leq$ accordingly. For $\omega,
\omega'\in \Lambda|m,n|$ we set $\omega \preceq \omega'$ if and only
if $\omega_r-\omega'_r\geq 0$ and $\omega_r-\omega'_r$ is even for all
$0\leq r\leq m-1$. We then define $\leq$ on $\Lambda(m,n)$ by setting
$\lambda \leq \mu$ if and only if $|\lambda|\preceq |\mu|$ and
$\lambda^r\subseteq \mu^r$ for all $0\leq r\leq m-1$. We then have
that Corollory \ref{neccondition} holds with respect to this new
partial order. In Section 5 we can define the co-saturated subset
$\Lambda_\omega$ as before (using the new partial order). Replacing
$m-r$ with $r$ throughout the arguments we obtain that
$$\tilde{B}_\omega^m\cong \bigotimes_{0\leq r\leq m-1}B(\omega_r,
\overline{\delta_r}),$$ and hence we get the required factorisation of
homomorphisms between standard modules and of decomposition numbers.

\providecommand{\bysame}{\leavevmode\hbox to3em{\hrulefill}\thinspace}
\providecommand{\MR}{\relax\ifhmode\unskip\space\fi MR }
\providecommand{\MRhref}[2]{%
  \href{http://www.ams.org/mathscinet-getitem?mr=#1}{#2}
}
\providecommand{\href}[2]{#2}

\end{document}